\newtheorem{thm}{Theorem}[section]
\newtheorem{prob}{Problem}
\newtheorem{lem}[thm]{Lemma}
\newtheorem{cor}[thm]{Corollary}
\newtheorem{remark}{Remark}
\theoremstyle{definition}
\begin{document}
\title{Automorphism group of the complete alternating group graph\footnote{Supported
by the National Natural Science Foundation of China (Grant Nos. 11671344, 11531011).}}
\author{{\small Xueyi Huang, \ \ Qiongxiang Huang\footnote{
Corresponding author.}\setcounter{footnote}{-1}\footnote{
\emph{E-mail address:} huangxymath@gmail.com (X.Y. Huang), huangqx@xju.edu.cn (Q.X. Huang).}}\\[2mm]\scriptsize
College of Mathematics and Systems Science,
\scriptsize Xinjiang University, Urumqi, Xinjiang 830046, P. R. China}
\date{}
\maketitle
{\flushleft\large\bf Abstract}
Let $S_n$ and $A_n$ denote the symmetric group and alternating group of degree $n$ with $n\geq 3$, respectively. Let $S$ be the set  of all $3$-cycles in $S_n$. The \emph{complete alternating group graph}, denoted by $CAG_n$, is defined as the Cayley graph $\mathrm{Cay}(A_n,S)$ on $A_n$ with respect  to $S$. In this paper,  we show that $CAG_n$ ($n\geq 4$) is not a normal Cayley graph. Furthermore,  the automorphism group of $CAG_n$ for $n\geq 5$ is obtained, which  equals to $\mathrm{Aut}(CAG_n)=(R(A_n)\rtimes \mathrm{Inn}(S_n))\rtimes \mathbb{Z}_2\cong (A_n\rtimes S_n)\rtimes \mathbb{Z}_2$, where $R(A_n)$ is the right regular representation of $A_n$, $\mathrm{Inn}(S_n)$ is the inner automorphism group of $S_n$, and $\mathbb{Z}_2=\langle h\rangle$, where $h$ is the map $\alpha\mapsto\alpha^{-1}$ ($\forall \alpha\in A_n$).
\begin{flushleft}
\textbf{Keywords:} Complete alternating group graph; Automorphism group; Normal Cayley graph
\end{flushleft}
\textbf{AMS Classification:} 05C25

\section{Introduction}\label{s-1}
Let $X=(V,E)$ be a simple undirected graph. An automorphism of $X$ is a permutation on its vertex set $V$ that preserves adjacency relations. The \emph{automorphism group} of $X$, denoted by $\mathrm{Aut(X)}$, is the set of all automorphisms of $X$.

For a finite group $\Gamma$, and a subset $T$ of $\Gamma$ such that $e\not\in T$ ($e$ is the identity element of $\Gamma$) and  $T=T^{-1}$, the \emph{Cayley graph} $\mathrm{Cay}(\Gamma,T)$ on $\Gamma$ with respect to $T$ is defined as the undirected graph with vertex set $\Gamma$ and  edge set $\{(\gamma,t\gamma )\mid\gamma\in\Gamma,t\in T\}$. The \emph{right regular representation} $R(\Gamma)=\{r_\gamma:x\mapsto x\gamma~(\forall x\in \Gamma)\mid\gamma\in\Gamma\}$, i.e., the action of $\Gamma$ on itself by right multiplication,  is a subgroup of the automorphism group $\mathrm{Aut}(\mathrm{Cay}(\Gamma,T))$ of the Cayley graph $\mathrm{Cay}(\Gamma,T)$.  Hence, every Cayley graph is vertex-transitive. Furthermore, the group $\mathrm{Aut}(\Gamma, T)=\{\sigma\in \mathrm{Aut}(\Gamma)\mid T^\sigma=T\}$ is  a
subgroup of $\mathrm{Aut}(\mathrm{Cay}(\Gamma,T))_e$, the
stabilizer of the identity vertex $e$ in $\mathrm{Aut}(\mathrm{Cay}(\Gamma,T))$, and so is also a subgroup of  $\mathrm{Aut}(\mathrm{Cay}(\Gamma,T))$. The Cayley graph $\mathrm{Cay}(\Gamma,T)$ is said to be
\emph{normal} if $R(\Gamma)$ is a normal subgroup of $\mathrm{Aut}(\mathrm{Cay}(\Gamma,T))$. By Godsil \cite{Godsil},  $N_{\mathrm{Aut}(\mathrm{Cay}(\Gamma,T))}(R(\Gamma)) = R(\Gamma)\rtimes \mathrm{Aut}(\Gamma, T)$. Thus, $\mathrm{Cay}(\Gamma,T)$ is normal if and only if $\mathrm{Aut}(\mathrm{Cay}(\Gamma,T))= R(\Gamma)\rtimes \mathrm{Aut}(\Gamma, T)$.

A basic problem in algebraic graph theory is to determine the (full) automorphism groups of Cayley graphs. As the (full) automorphism group of a  normal Cayley graph $\mathrm{Cay}(\Gamma,T)$  is always equal to  $\mathrm{Aut}(\mathrm{Cay}(\Gamma,T))= R(\Gamma)\rtimes \mathrm{Aut}(\Gamma, T)$, to determine  the normality of  Cayley graphs  is an important problem in the literature. The whole information about the normality of Cayley graphs on the cyclic groups of prime order and the groups of order twice a prime were gained by Alspach \cite{Alspach} and Du et al. \cite{Du}, respectively.   Wang et al. \cite{Wang} obtained all disconnected normal Cayley graphs. Fang et al. \cite{Fang} proved that the vast majority of connected cubic Cayley graphs on non-abelian simple groups are normal. Baik et al. \cite{Baik,Baik1} listed all connected non-normal Cayley graphs on abelian groups with valency less than $6$ and Feng et al. \cite{Feng} proved that all connected tetravalent Cayley graphs on $p$-groups of nilpotent class $2$ with $p$ an odd prime are normal. For more results regarding automorphism groups and normality of Cayley graphs, we refer the reader to \cite{Deng1,Feng2,Feng3,Godsil,Godsil2,Huang,Li,Li1,Praeger,Xu,Zhou1}.

With regard to the symmetric group $S_n$ ($n\geq 3$), let $T$ be  a set of transpositions generating $S_n$. The \emph{transposition graph} $\mathrm{Tra}(T)$ of $T$  is defined as the graph with vertex set $\{1,2,\ldots,n\}$ and with an edge connecting two vertices $i$ and $j$ iff $(i,j)\in T$. A set of transposition $T$ can generate $S_n$ iff  $\mathrm{Tra}(T)$ is connected. Godsil and Royle \cite{Godsil1} showed that if $\mathrm{Tra}(T)$ is an asymmetric tree, then  $\mathrm{Aut}(\mathrm{Cay}(S_n,T))=R(S_n)\cong S_n$, where $R(S_n)$ is the right regular representation of $S_n$. Feng \cite{Feng1} showed that if $\mathrm{Tra}(T)$ is an arbitrary tree, then $\mathrm{Aut}(\mathrm{Cay}(S_n,T))=R(S_n)\rtimes \mathrm{Aut}(S_n,T)$. Ganesan \cite{Ganesan} showed that if the girth of $\mathrm{Tra}(T)$ is at least $5$, then $\mathrm{Aut}(\mathrm{Cay}(S_n,T))=R(S_n)\rtimes \mathrm{Aut}(S_n,T)$. Thus, all these Cayley graphs are normal. However, Ganesan \cite{Ganesan} showed that if $\mathrm{Tra}(T)$ is a $4$-cycle graph, then $\mathrm{Cay}(S_n,T)$ is non-normal. For non-normal Cayley graphs, it seems a difficult work to obtain the full automorphism groups.  Ganesan \cite{Ganesan1} showed that if $\mathrm{Tra}(T)$ is a complete graph, then $\mathrm{Cay}(S_n,T)$ is non-normal and $\mathrm{Aut}(\mathrm{Cay}(S_n,T))=(R(S_n)\rtimes \mathrm{Aut}(S_n,T))\rtimes\mathbb{Z}_2=(R(S_n)\rtimes \mathrm{Inn}(S_n))\rtimes\mathbb{Z}_2$, where $\mathrm{Inn}(S_n)$ is the inner automorphism group of $S_n$, and $\mathbb{Z}_2=\langle h\rangle$, where $h$ is the map $\alpha\mapsto\alpha^{-1}$ ($\forall \alpha\in S_n$). Let  $\Gamma_n$ be the \emph{derangement graph} which is defined to be  the Cayley graph $\mathrm{Cay}(S_n,D)$, where $D$ is the set of fixed point free permutations of $S_n$. Also, Deng et al. \cite{Deng} showed that  $\mathrm{Aut}(\Gamma_n)=(R(S_n)\rtimes \mathrm{Inn}(S_n))\rtimes\mathbb{Z}_2$, and hence $\Gamma_n$ is non-normal.

With regard to the alternating group $A_n$ ($n\geq 3$), set $T=\{(1,2,i),(1,i,2)\mid 3\leq i\leq n\}$. The \emph{alternating group graph}, denoted by $AG_n$, is defined as the Cayley graph $\mathrm{Cay}(A_n,T)$. Since $T$ can generate $A_n$ (cf. \cite{Suzuki}, p.298),  $AG_n$ is connected. The alternating group graph was introduced by Jwo et al. \cite{Jwo} as an interconnection network topology for computing systems. Zhou \cite{Zhou} showed that $\mathrm{Aut}(AG_n)=R(A_n)\rtimes \mathrm{Aut}(A_n,T)\cong A_n\rtimes (S_{n-2}\times S_2)$, and hence $AG_n$ is normal.  Motivated by the work of Zhou \cite{Zhou} and Ganesan \cite{Ganesan}, we define the \emph{complete alternating group graph} $CAG_n$ as the Cayley graph $\mathrm{Cay}(A_n,S)$ on $A_n$ ($n\geq 3$) with respect to $S=\{(i,j,k),(i,k,j)\mid 1\leq i<j<k\leq n\}$, the complete set of $3$-cycles in $S_n$. Clearly, $CAG_n$ is  a connected Cayley graph. Furthermore,  $CAG_n$ can be viewed as one of the two isomorphic connected components of the $(n-3)$-point fixing graph $\mathcal{F}(n,n-3)$ defined in \cite{Cheng}, which has only integral eigenvalues (cf. \cite{Cheng}, Corollary 1.2). If $n=3$, $CAG_n=K_3$ is obviously normal and $\mathrm{Aut}(CAG_3)\cong S_3$.

In the present paper,  it is shown that $CAG_n$ ($n\geq 4$) is not a normal Cayley graph. Moreover, the automorphism group of $CAG_n$ ($n\geq 5$) is shown to equal $\mathrm{Aut}(CAG_n)=(R(A_n)\rtimes \mathrm{Aut}(S_n,S))\rtimes \mathbb{Z}_2=(R(A_n)\rtimes \mathrm{Inn}(S_n))\rtimes \mathbb{Z}_2\cong (A_n\rtimes S_n)\rtimes \mathbb{Z}_2$, where $R(A_n)$ is the right regular representation of $A_n$, $\mathrm{Inn}(S_n)$ is the inner automorphism group of $S_n$, and $\mathbb{Z}_2=\langle h\rangle$, where $h$ is the map $\alpha\mapsto\alpha^{-1}$ ($\forall \alpha\in A_n$). It follows that $CAG_n$ is arc-transitive.

\section{Non-normality of the complete alternating group graph}\label{s-2}
In this section, we list some important results in group theory and algebraic graph theory which are useful throughout this paper, and show that the complete alternating group graph $CAG_n$ is non-normal for all $n\geq 4$.

\begin{lem}[\cite{Suzuki}, Chapter 3, Theorems 2.17--2.20]\label{lem-2-1}
If $n \geq 4$ and $n\neq 6$, then $\mathrm{Aut}(S_n)=\mathrm{Aut}(A_n)=\mathrm{Inn}(S_n)\cong S_n$.  If $n = 6$, then $|\mathrm{Aut}(S_n):\mathrm{Inn}(S_n)|=|\mathrm{Aut}(A_n):\mathrm{Inn}(S_n)|=2$, and each element in $\mathrm{Aut}(S_n)\setminus\mathrm{Inn}(S_n)$ maps a transposition to a product of three disjoint transpositions, each element in $\mathrm{Aut}(A_n)\setminus\mathrm{Inn}(S_n)$ maps a $3$-cycle to a product of two disjoint $3$-cycles.
\end{lem}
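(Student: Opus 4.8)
The plan is to determine $\mathrm{Aut}(S_n)$ first and then deduce the corresponding statement for $\mathrm{Aut}(A_n)$. The guiding principle is that any automorphism $\phi$ of $S_n$ preserves element orders and sends each conjugacy class to a conjugacy class of the same cardinality; in particular $\phi$ permutes the conjugacy classes of involutions. Since $Z(S_n)=1$ for $n\geq 3$, we have $\mathrm{Inn}(S_n)\cong S_n/Z(S_n)\cong S_n$, so it suffices to prove that every automorphism of $S_n$ is inner except when $n=6$, and to isolate the exceptional behaviour in that case.

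The heart of the argument is a counting comparison. The involutions of $S_n$ are exactly the products of $k$ disjoint transpositions with $1\leq k\leq \lfloor n/2\rfloor$, and the conjugacy class of such an element has size $\frac{n!}{2^k\,k!\,(n-2k)!}$. The transpositions form the class with $k=1$, of size $\binom{n}{2}$. I would show that the equation $\binom{n}{2}=\frac{n!}{2^k\,k!\,(n-2k)!}$ has no solution with $k\geq 2$ unless $n=6$, where $k=3$ produces the coincidence $\binom{6}{2}=\frac{6!}{2^3\cdot 3!}=15$. Hence for $n\neq 6$ the transposition class is the unique involution class of its size, so every automorphism must map transpositions to transpositions. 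Next I would show that an automorphism $\phi$ stabilizing the class of transpositions is inner: two transpositions commute iff they are disjoint and otherwise generate a copy of $S_3$ (their product has order $3$), so $\phi$ preserves the "sharing a point" incidence, and one reconstructs a bijection $\sigma$ of $\{1,\dots,n\}$ with $\phi(\tau)=\sigma\tau\sigma^{-1}$ for every transposition $\tau$. Because transpositions generate $S_n$, this forces $\phi$ to be conjugation by $\sigma$, giving $\mathrm{Aut}(S_n)=\mathrm{Inn}(S_n)\cong S_n$ for $n\neq 6$.

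For $A_n$ I would run the parallel argument with $3$-cycles in place of transpositions. For $n\geq 5$ the $3$-cycles form a single conjugacy class of $A_n$ of size $2\binom{n}{3}$ (the $S_n$-class of cycle type $(3,1,\dots,1)$ does not split, since the fixed points are repeated), and comparing with the classes of products of disjoint $3$-cycles shows this class is determined by its size precisely when $n\neq 6$, the coincidence at $n=6$ being with products of two disjoint $3$-cycles (both classes having size $40$). The inclusion $\mathrm{Inn}(S_n)\hookrightarrow\mathrm{Aut}(A_n)$ arises by restricting conjugations from $S_n$ to the normal subgroup $A_n$; this is injective because $C_{S_n}(A_n)=1$ for $n\geq 4$, so its image is a copy of $S_n$. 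A $3$-cycle-preserving automorphism is then shown to be induced by conjugation, yielding $\mathrm{Aut}(A_n)=\mathrm{Inn}(S_n)\cong S_n$ for $n\neq 6$ (the small case $n=4$, where the $3$-cycles split, being checked directly). Finally, for $n=6$ I would exhibit the exceptional outer automorphism, for instance via the two inequivalent transitive actions of $S_6$ on six points (equivalently, the two $S_6$-classes of index-$6$ subgroups), and verify directly that it interchanges the two size-$15$ classes, sending a transposition to a product of three disjoint transpositions, and on $A_6$ sending a $3$-cycle to a product of two disjoint $3$-cycles.

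I expect the main obstacle to be twofold. First, the counting step requires proving that the class-size equalities hold only at $n=6$, which means bounding the function $\frac{n!}{2^k\,k!\,(n-2k)!}$ in $k$ and disposing of the smallest values of $n$ by hand. Second, and more delicate, is producing and genuinely verifying the outer automorphism of $S_6$ rather than merely asserting its existence, together with confirming its stated action on transpositions and on $3$-cycles.
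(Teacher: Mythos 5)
The paper never proves this lemma: it is imported as a black box from Suzuki's \emph{Group Theory I} (Chapter 3, Theorems 2.17--2.20), so there is no internal proof to compare yours against. Judged on its own merits, your outline is the standard classical argument (essentially the one the cited reference follows), and its key arithmetic is correct: $\frac{n!}{2^k\,k!\,(n-2k)!}=\binom{n}{2}$ with $k\geq 2$ forces $(n,k)=(6,3)$ (both sides equal $15$), and $\frac{n!}{3^k\,k!\,(n-3k)!}=2\binom{n}{3}$ with $k\geq 2$ forces $(n,k)=(6,2)$ (both sides equal $40$); likewise the $3$-cycles do form a single $A_n$-class for $n\geq 5$ because the cycle type $(3,1,\ldots,1)$ has repeated parts. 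Three places in your sketch carry more weight than you give them. First, reconstructing the point bijection $\sigma$ from a transposition-preserving automorphism rests on the dichotomy between ``stars'' (all transpositions through a common point) and ``triangles'' $\{(a,b),(a,c),(b,c)\}$ among pairwise-intersecting transpositions; for $n\geq 5$ maximal stars have size $n-1\geq 4$ and so cannot be confused with triangles, but $n=4$ (which the lemma covers) needs a separate check, e.g.\ that the product of the three transpositions through a point has order $4$ while the product of a triangle has order $2$. Second, for the $A_6$ half of the exceptional case you need not merely to exhibit the outer automorphism of $S_6$ but also to rule out that a product of two triple transpositions of order $3$ can be a $3$-cycle (no triple transposition inverts a $3$-cycle under conjugation, since any involution inverting $(1,2,3)$ moves at most four points); this is what forces elements of $\mathrm{Aut}(A_6)\setminus\mathrm{Inn}(S_6)$ to send $3$-cycles to $(3,3)$-elements. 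Third, the exact index statement $|\mathrm{Aut}(S_6):\mathrm{Inn}(S_6)|=|\mathrm{Aut}(A_6):\mathrm{Inn}(S_6)|=2$ needs the closing observation that the composite of two class-swapping automorphisms preserves the transposition (resp.\ $3$-cycle) class and is therefore inner (resp.\ induced by $S_6$-conjugation), so that the outer part is a single coset. None of these is a wrong idea---they are precisely the ``twofold obstacle'' you yourself flag, plus the $A_n$ reconstruction step you assert without proof---but they are where the real content of the cited theorems lies, so as written your proposal is a correct roadmap rather than a complete proof.
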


The \emph{Kneser graph}, denoted by $KG(n,r)$, is the graph with the $r$-subsets of a fixed $n$-set as its vertices, with two $r$-subsets adjacent if they are disjoint. Clearly, $KG(n,1)$ is the complete graph, and $KG(n,r)$ is the empty graph if $n<2r$. The following lemma shows that the automorphism group of $KG(n,r)$ ($n\geq 2r+1$) is isomorphic to the symmetric group $S_n$.

\begin{lem}[\cite{Godsil1}, Chapter 7, Corollary 7.8.2]\label{lem-2-2}
If $n\geq 2r+1$, then $\mathrm{Aut}(KG(n,r))\cong S_{n}$.
\end{lem}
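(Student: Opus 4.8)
The plan is to exhibit an explicit copy of $S_n$ inside $\mathrm{Aut}(KG(n,r))$ and then to show that no other automorphisms exist, by reconstructing the underlying ground set $[n]=\{1,2,\ldots,n\}$ purely from the graph. Every permutation $\pi\in S_n$ acts on the $r$-subsets of $[n]$ by $A\mapsto\pi(A)$, and since $\pi(A)\cap\pi(B)=\pi(A\cap B)$ this preserves disjointness and hence is a graph automorphism of $KG(n,r)$; because $n\ge 2r+1$ forces $1\le r\le n-1$, the action is faithful, so $S_n$ embeds in $\mathrm{Aut}(KG(n,r))$. It therefore suffices to prove the reverse containment, namely that every automorphism of $KG(n,r)$ is induced by a permutation of $[n]$.

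The key structural input I would use is the classification of the maximum independent sets. An independent set of $KG(n,r)$ is exactly an intersecting family of $r$-subsets, and for each point $i\in[n]$ the \emph{star} $S_i=\{A\mid i\in A\}$ is such a family, of size $\binom{n-1}{r-1}$. By the Erd\H{o}s--Ko--Rado theorem together with its uniqueness statement, the hypothesis $n\ge 2r+1$ (equivalently $n>2r$) ensures that the stars $S_1,\ldots,S_n$ are precisely the maximum independent sets of $KG(n,r)$. This is where the hypothesis is genuinely needed and is the step I expect to be the main obstacle: for $n=2r$ there are many further maximum intersecting families and the reconstruction below would fail.

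Granting this, any $\phi\in\mathrm{Aut}(KG(n,r))$ preserves both independence and cardinality, so it permutes the maximum independent sets and hence the stars; this yields a permutation $\bar\phi\in S_n$ determined by $\phi(S_i)=S_{\bar\phi(i)}$. To recover $\phi$ itself I would use the identity $A=\{\,i\in[n]\mid A\in S_i\,\}$, which holds because $A\in S_i\iff i\in A$. For an arbitrary $r$-subset $A$ this gives $\phi(A)\in S_j\iff A\in\phi^{-1}(S_j)=S_{\bar\phi^{-1}(j)}\iff\bar\phi^{-1}(j)\in A\iff j\in\bar\phi(A)$, so that $\phi(A)=\bar\phi(A)$ for every vertex $A$.

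Thus $\phi$ agrees with the automorphism induced by $\bar\phi$, so $\phi\mapsto\bar\phi$ is a group homomorphism $\mathrm{Aut}(KG(n,r))\to S_n$ that is inverse to the embedding of the first paragraph, giving $\mathrm{Aut}(KG(n,r))\cong S_n$. Apart from invoking the Erd\H{o}s--Ko--Rado uniqueness result, every step here is routine verification.
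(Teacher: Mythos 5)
Your proof is correct, and it is essentially the argument behind the result as it appears in the literature: the paper offers no proof of this lemma at all, quoting it directly from Godsil and Royle (Corollary 7.8.2), and the proof there is exactly your route --- for $n\geq 2r+1$ the Erd\H{o}s--Ko--Rado theorem with its uniqueness statement identifies the maximum independent sets of $KG(n,r)$ as the stars, so every automorphism permutes the stars and is thereby induced by a permutation of the ground set. Nothing to correct.
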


The following lemma gives a criterion for  Cayley graphs to be normal.

\begin{lem}[\cite{Xu}]\label{lem-2-3}
Let $X=\mathrm{Cay}(\Gamma,T)$ be the Cayley graph on $\Gamma$ with respect to $T$. Let $\mathrm{Aut}(X)_e$ be the set of automorphisms of $X$ that fixes the identity vertex $e$. Then $X$ is normal
if and only if every element of $\mathrm{Aut}(X)_e$ is an automorphism of the group $\Gamma$.
\end{lem}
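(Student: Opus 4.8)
The plan is to prove the biconditional by combining Godsil's normalizer formula $N_{\mathrm{Aut}(X)}(R(\Gamma)) = R(\Gamma)\rtimes \mathrm{Aut}(\Gamma,T)$, quoted in the introduction, with the regularity of $R(\Gamma)$ on $V(X)=\Gamma$. First I would record two facts valid for every Cayley graph. Since $R(\Gamma)$ is transitive on vertices, any $g\in\mathrm{Aut}(X)$ can be adjusted by some $r\in R(\Gamma)$ so that $r^{-1}g$ fixes $e$; and because $R(\Gamma)$ is regular, $R(\Gamma)\cap\mathrm{Aut}(X)_e=\{1\}$. Hence $\mathrm{Aut}(X)=R(\Gamma)\,\mathrm{Aut}(X)_e$ as an internal product with trivial intersection. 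Secondly, the neighborhood of the identity vertex $e$ in $X$ is exactly $T$, so every element of $\mathrm{Aut}(X)_e$ fixes $T$ setwise; consequently an element of $\mathrm{Aut}(X)_e$ that also happens to be a group automorphism of $\Gamma$ necessarily lies in $\mathrm{Aut}(\Gamma,T)$.

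For the forward direction, assume $X$ is normal, i.e.\ $R(\Gamma)\trianglelefteq\mathrm{Aut}(X)$. Then $\mathrm{Aut}(X)=N_{\mathrm{Aut}(X)}(R(\Gamma))=R(\Gamma)\rtimes\mathrm{Aut}(\Gamma,T)$ by Godsil's formula. Given $\sigma\in\mathrm{Aut}(X)_e$, I would write $\sigma=r_\gamma\sigma_0$ with $r_\gamma\in R(\Gamma)$ and $\sigma_0\in\mathrm{Aut}(\Gamma,T)$; evaluating at $e$ and using that both $\sigma$ and the group automorphism $\sigma_0$ fix $e$ forces $\gamma=e$, so $r_\gamma$ is the identity and $\sigma=\sigma_0\in\mathrm{Aut}(\Gamma,T)\leq\mathrm{Aut}(\Gamma)$. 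Thus every element of $\mathrm{Aut}(X)_e$ is an automorphism of $\Gamma$, as required.

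For the converse, assume every element of $\mathrm{Aut}(X)_e$ is an automorphism of $\Gamma$. Combined with the setwise stabilization of $T$ noted above, this gives $\mathrm{Aut}(X)_e\subseteq\mathrm{Aut}(\Gamma,T)$; since the reverse inclusion $\mathrm{Aut}(\Gamma,T)\subseteq\mathrm{Aut}(X)_e$ is recorded in the introduction, we obtain $\mathrm{Aut}(X)_e=\mathrm{Aut}(\Gamma,T)$. Substituting into the product decomposition yields $\mathrm{Aut}(X)=R(\Gamma)\,\mathrm{Aut}(\Gamma,T)=N_{\mathrm{Aut}(X)}(R(\Gamma))$, so $R(\Gamma)$ is normal in $\mathrm{Aut}(X)$ and $X$ is normal. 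The only delicate point I anticipate is purely bookkeeping: reconciling the abstract factorization $\mathrm{Aut}(X)=R(\Gamma)\,\mathrm{Aut}(X)_e$ with Godsil's semidirect decomposition of the normalizer, and checking that the hypothesis ``every stabilizer element is a group automorphism'' is exactly what upgrades $\mathrm{Aut}(X)_e$ to the $T$-stabilizing automorphism group $\mathrm{Aut}(\Gamma,T)$, so that the two descriptions coincide.
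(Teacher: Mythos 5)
Your proof is correct, but note that the paper never proves this lemma at all: it is imported verbatim from the cited reference (Xu, \emph{Discrete Math.} 182 (1998)), so there is no internal argument to compare against. Your derivation is the standard one, and essentially the proof in the cited source: combine Godsil's normalizer formula $N_{\mathrm{Aut}(X)}(R(\Gamma)) = R(\Gamma)\rtimes \mathrm{Aut}(\Gamma,T)$ (which the paper quotes in its introduction) with the factorization $\mathrm{Aut}(X)=R(\Gamma)\,\mathrm{Aut}(X)_e$, $R(\Gamma)\cap \mathrm{Aut}(X)_e=\{1\}$, forced by transitivity and regularity of $R(\Gamma)$, and observe that normality is thereby equivalent to $\mathrm{Aut}(X)_e=\mathrm{Aut}(\Gamma,T)$. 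Each step checks out: the neighborhood of $e$ is exactly $T$, so a stabilizer element that is a group automorphism lies in $\mathrm{Aut}(\Gamma,T)$; in the forward direction, evaluating the decomposition $\sigma=r_\gamma\sigma_0$ at $e$ does force $r_\gamma$ to be trivial; and in the converse, $\mathrm{Aut}(X)=R(\Gamma)\,\mathrm{Aut}(\Gamma,T)=N_{\mathrm{Aut}(X)}(R(\Gamma))$ is precisely the statement that $R(\Gamma)$ is normal in $\mathrm{Aut}(X)$.
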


Using Lemma \ref{lem-2-3}, we now prove that $CAG_n$ ($n\geq 4$) is not a normal Cayley graph.

\begin{thm}\label{thm-2-1}
Let $S$ be the complete set of $3$-cycles in $S_n$ ($n\geq 4$). Then, the map $h: \alpha\mapsto\alpha^{-1}$ ($\forall \alpha\in A_n$) is an automorphism of the complete alternating group graph $CAG_n=\mathrm{Cay}(A_n,S)$. In
particular, $CAG_n$ is non-normal.
\end{thm}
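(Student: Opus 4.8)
The statement has two parts: first, that $h:\alpha\mapsto\alpha^{-1}$ is a graph automorphism of $CAG_n$, and second, that this forces non-normality. Let me think about what the essential content is and how I'd organize a clean argument.

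For the first part, I need to show $h$ preserves adjacency. Two vertices $\alpha,\beta\in A_n$ are adjacent in $CAG_n=\mathrm{Cay}(A_n,S)$ iff $\beta\alpha^{-1}\in S$, i.e. $\beta\alpha^{-1}$ is a $3$-cycle. Applying $h$ sends them to $\alpha^{-1},\beta^{-1}$. Adjacency of the images means $\beta^{-1}(\alpha^{-1})^{-1}=\beta^{-1}\alpha\in S$. So I must check that $\beta\alpha^{-1}\in S \iff \beta^{-1}\alpha\in S$. The key observation is that $\beta^{-1}\alpha = (\alpha^{-1}\beta)^{-1}$, and $S$ is closed under inverses (since it contains both $(i,j,k)$ and its inverse $(i,k,j)$). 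The cleanest check: if $g=\beta\alpha^{-1}$ is a $3$-cycle, I want $\beta^{-1}\alpha$ to be a $3$-cycle. Note $\beta^{-1}\alpha$ is conjugate to $\alpha\beta^{-1}=(\beta\alpha^{-1})^{-1}=g^{-1}$ (conjugating by $\alpha^{-1}$, or directly observing $\beta^{-1}\alpha=\alpha^{-1}(\alpha\beta^{-1})\alpha$). Since $g$ is a $3$-cycle, so is $g^{-1}$, and conjugation preserves cycle type, hence $\beta^{-1}\alpha$ is a $3$-cycle, i.e. lies in $S$. The reverse implication is identical by symmetry. This establishes that $h$ is an adjacency-preserving bijection, hence $h\in\mathrm{Aut}(CAG_n)$.

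For the second part, I invoke Lemma \ref{lem-2-3}: the graph is normal iff every element of $\mathrm{Aut}(CAG_n)_e$ (the stabilizer of the identity vertex $e$) is a group automorphism of $A_n$. I will show $h$ is a witness that this fails. First, $h$ fixes $e$ since $e^{-1}=e$, so $h\in\mathrm{Aut}(CAG_n)_e$. Then I must verify $h$ is \emph{not} a group automorphism of $A_n$. This is immediate for $n\geq 4$ because $A_n$ is non-abelian: a group automorphism must satisfy $(\alpha\beta)^\sigma=\alpha^\sigma\beta^\sigma$, whereas $h(\alpha\beta)=(\alpha\beta)^{-1}=\beta^{-1}\alpha^{-1}=h(\beta)h(\alpha)$, which equals $h(\alpha)h(\beta)=\alpha^{-1}\beta^{-1}$ only when $\alpha^{-1}$ and $\beta^{-1}$ commute. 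Since $A_n$ is non-abelian for $n\geq 4$, I can exhibit two non-commuting elements (e.g. two $3$-cycles sharing exactly one point) to confirm $h$ is not a homomorphism. Therefore $\mathrm{Aut}(CAG_n)_e$ contains an element that is not an automorphism of $A_n$, and by Lemma \ref{lem-2-3}, $CAG_n$ is non-normal.

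I don't anticipate a genuine obstacle here; the argument is short once the two reductions are set up correctly. The only point requiring mild care is the adjacency computation in the first part — specifically getting the conjugacy relation $\beta^{-1}\alpha = \alpha^{-1}(\alpha\beta^{-1})\alpha$ right so that I can cite "conjugation preserves cycle type" rather than computing anything explicitly. The non-abelian check in the second part is routine but must be stated, since the whole force of the theorem is that the inversion map, while always a graph map here, fails to be a group map precisely because of non-commutativity — and it is exactly this failure that Lemma \ref{lem-2-3} converts into non-normality.
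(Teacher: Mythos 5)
Your proposal is correct and follows essentially the same route as the paper: adjacency is preserved because $\beta^{-1}\alpha$ is a conjugate of the $3$-cycle $\alpha\beta^{-1}$ (the paper writes $\alpha^{-1}\beta=\beta^{-1}(i,k,j)\beta$, which is the same conjugation observation), and non-normality follows from Lemma \ref{lem-2-3} since $h$ fixes $e$ but fails to be a group automorphism of the non-abelian group $A_n$. No gaps; your extra remarks (closure of $S$ under inversion, exhibiting non-commuting $3$-cycles) only make explicit what the paper leaves implicit.
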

\begin{proof}
Note that $S$ consists of all $3$-cycles in $S_n$. Thus, to show that the map $h: \alpha\mapsto\alpha^{-1}$ ($\forall \alpha\in A_n$) is an automorphism of  $CAG_n$, it suffices to show that for any $\alpha,\beta\in A_n$, if $\alpha\beta^{-1}$ is a $3$-cycle, then $\alpha^{-1}\beta$ is a $3$-cycle. In fact, putting $\alpha\beta^{-1}=(i,j,k)$, we have $\alpha^{-1}=\beta^{-1}(i,k,j)$, and so $\alpha^{-1}\beta=\beta^{-1}(i,k,j)\beta$ is a $3$-cycle.

Now we shall prove that $CAG_n$ is non-normal. Let $G:=\mathrm{Aut}(CAG_n)$, and let $G_e$ denote the stabilizer of the identity vertex $e$ in $G$. Clearly, $h\in G_e$. However, $h$ is not an automorphism of the group $A_n$ because $A_n$ is non-abelian for $n\geq 4$. Hence, by Lemma \ref{lem-2-3}, the Cayley graph $CAG_n=\mathrm{Cay}(A_n,S)$ is non-normal.

The proof is now complete.
\end{proof}
\begin{remark}\label{rem-1}
\emph{It is well known that $A_n$ is a simple group for $n\geq 5$, and $S_n=A_n\rtimes S_2$. Thus the only normal subgroups of $S_n$ are the trivial group, the alternating group $A_n$ and the symmetric group $S_n$ itself. For any fixed $k$ ($4\leq k\leq n$), all $k$-cycles in $S_n$ form a full conjugacy class, it follows that the subgroup they generate is normal. This implies that all $k$-cycles in $S_n$ generate $A_n$ (resp. $S_n$) if $k$ is odd (resp. even). Let $T$  be the set of all $k$-cycles ($k\geq 4$) in $S_n$. As in the proof of Theorem \ref{thm-2-1}, from Lemma \ref{lem-2-3} one can easily deduce that  the Cayley graph $\mathrm{Cay}(\Gamma,T)$ is non-normal, where $\Gamma=A_n$ if $k$ is odd, and $\Gamma=S_n$ if $k$ is even.}
\end{remark}

\section{Automorphism group of the complete alternating group graph}\label{s-3}
In this section, we focus on determining the full automorphism group  of the complete alternating group graph $CAG_n$. First of all, the following theorem suggests that  $\mathrm{Aut}(CAG_n)$  contains $(R(A_n)\rtimes \mathrm{Inn}(S_n))\rtimes \mathbb{Z}_2\cong (A_n\rtimes S_n)\rtimes\mathbb{Z}_2$ as its subgroup, and so is of order at least $(n!)^2$, for all $n\geq 4$.

\begin{thm}\label{thm-2-2}
Let $S$ be the complete set of $3$-cycles in $S_n$ ($n\geq 4$) and let $CAG_n=Cay(A_n,S)$ be the complete alternating group graph. Then
$$\mathrm{Aut}(CAG_n)\supseteq (R(A_n)\rtimes \mathrm{Inn}(S_n))\rtimes \mathbb{Z}_2\cong (A_n\rtimes S_n)\rtimes \mathbb{Z}_2,$$
where $R(A_n)$ is the right regular representation of $A_n$, $\mathrm{Inn}(S_n)$ is the inner automorphism group of $S_n$, and $\mathbb{Z}_2=\langle h\rangle$, and $h$ is the map $\alpha\mapsto\alpha^{-1}$ ($\forall \alpha\in A_n$).
\end{thm}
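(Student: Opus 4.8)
The plan is to exhibit the three prescribed families of permutations—right translations $R(A_n)$, the conjugations $\mathrm{Inn}(S_n)$, and the inversion map $h$—as graph automorphisms of $CAG_n$, and then to verify that they assemble into the claimed iterated semidirect product. That $R(A_n)\le\mathrm{Aut}(CAG_n)$ is automatic for any Cayley graph, and that $h\in\mathrm{Aut}(CAG_n)$ with $h^2=1$ is exactly Theorem~\ref{thm-2-1}; so the substantive work concerns $\mathrm{Inn}(S_n)$ and the way the three pieces fit together.

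First I would check that each inner automorphism $c_\sigma:x\mapsto\sigma x\sigma^{-1}$ ($\sigma\in S_n$) restricts to a graph automorphism of $CAG_n$. Since $A_n\trianglelefteq S_n$, the map $c_\sigma$ permutes $A_n$; and since $\alpha,\beta$ are adjacent iff $\alpha\beta^{-1}\in S$, adjacency is preserved because $c_\sigma(\alpha)c_\sigma(\beta)^{-1}=\sigma(\alpha\beta^{-1})\sigma^{-1}$ is again a $3$-cycle whenever $\alpha\beta^{-1}$ is. The restriction map $\mathrm{Inn}(S_n)\to\mathrm{Sym}(A_n)$ is injective for $n\ge 4$ because the centralizer of $A_n$ in $S_n$ is trivial, so $\mathrm{Inn}(S_n)$ embeds in $\mathrm{Aut}(CAG_n)$ as a copy of $S_n$. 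A direct computation gives $c_\sigma\,r_\gamma\,c_\sigma^{-1}=r_{\sigma\gamma\sigma^{-1}}$, which simultaneously shows that $\mathrm{Inn}(S_n)$ normalizes $R(A_n)$ and identifies the action as the conjugation action of $S_n$ on $A_n$. Together with the trivial intersection $R(A_n)\cap\mathrm{Inn}(S_n)=1$ (any $r_\gamma$ fixing $e$ forces $\gamma=e$, while a nontrivial $c_\sigma$ moves some vertex), this yields the subgroup $R(A_n)\rtimes\mathrm{Inn}(S_n)\cong A_n\rtimes S_n$.

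The crux is to show that $h$ normalizes $R(A_n)\rtimes\mathrm{Inn}(S_n)$, and here the naive computation is slightly deceptive: one finds $h\,r_\gamma\,h:x\mapsto\gamma^{-1}x$, i.e. conjugation by $h$ sends a right translation to a \emph{left} translation $\ell_{\gamma^{-1}}$, which is not itself an element of $R(A_n)$. The identity that rescues the argument is $\ell_g=r_g\,c_g$ for every $g\in A_n$ (indeed $r_g(c_g(x))=gxg^{-1}g=gx$), so that $h\,r_\gamma\,h=\ell_{\gamma^{-1}}=r_{\gamma^{-1}}\,c_{\gamma^{-1}}\in R(A_n)\rtimes\mathrm{Inn}(S_n)$. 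One also checks the easy relation $h\,c_\sigma\,h=c_\sigma$, so $h$ in fact centralizes $\mathrm{Inn}(S_n)$ and normalizes the whole product. I expect recognizing this left-versus-right subtlety, and spotting the factorization $\ell_g=r_g c_g$, to be the main obstacle.

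Finally I would assemble the pieces. Since $h$ fixes $e$ and the stabilizer of $e$ inside $R(A_n)\rtimes\mathrm{Inn}(S_n)$ is exactly $\mathrm{Inn}(S_n)$ (because $r_\gamma c_\sigma$ fixes $e$ iff $\gamma=e$), it suffices to note that $h\notin\mathrm{Inn}(S_n)$: every $c_\sigma$ is a group automorphism of $A_n$, whereas $h$ is not, since $A_n$ is non-abelian for $n\ge 4$. Hence $\langle h\rangle$ meets $R(A_n)\rtimes\mathrm{Inn}(S_n)$ trivially, and as $h$ has order $2$ and normalizes that group, we obtain the subgroup $(R(A_n)\rtimes\mathrm{Inn}(S_n))\rtimes\mathbb{Z}_2$ of $\mathrm{Aut}(CAG_n)$, with the abstract isomorphism $(A_n\rtimes S_n)\rtimes\mathbb{Z}_2$ following from $R(A_n)\cong A_n$ and $\mathrm{Inn}(S_n)\cong S_n$ (the latter using $Z(S_n)=1$).
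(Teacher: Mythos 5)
Your proof is correct, and it reaches the theorem by a genuinely different route from the paper's. The paper obtains the subgroup $R(A_n)\rtimes \mathrm{Inn}(S_n)$ in one stroke by citing Godsil's normalizer theorem \cite{Godsil}, $N_{G}(R(A_n))=R(A_n)\rtimes \mathrm{Aut}(A_n,S)$, and then identifying $\mathrm{Aut}(A_n,S)=\mathrm{Inn}(S_n)$ via the classification of $\mathrm{Aut}(A_n)$ in Lemma \ref{lem-2-1} (which in particular rules out the exceptional $n=6$ automorphisms, since those send $3$-cycles to products of two disjoint $3$-cycles and so cannot preserve $S$). You instead verify everything by hand: each conjugation $c_\sigma$ preserves adjacency because conjugation preserves cycle type, $\mathrm{Inn}(S_n)$ embeds in $\mathrm{Sym}(A_n)$ because $C_{S_n}(A_n)=1$ for $n\geq 4$, and the relation $c_\sigma r_\gamma c_\sigma^{-1}=r_{\sigma\gamma\sigma^{-1}}$ together with the trivial intersection gives $R(A_n)\rtimes\mathrm{Inn}(S_n)$. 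What the paper's route buys is the stronger equality $\mathrm{Aut}(A_n,S)=\mathrm{Inn}(S_n)$ (used afterwards, e.g.\ in Remark \ref{rem-2} for arc-transitivity), which your argument does not produce; but the theorem as stated only asserts a containment, so you do not need it. What your route buys is more significant: the paper, after showing $h\notin R(A_n)\rtimes\mathrm{Inn}(S_n)$, simply asserts that $H:=(R(A_n)\rtimes\mathrm{Inn}(S_n))\rtimes\mathbb{Z}_2$ is a subgroup of $G$ with $R(A_n)\rtimes\mathrm{Inn}(S_n)$ of index $2$, without ever checking that $h$ normalizes $R(A_n)\rtimes\mathrm{Inn}(S_n)$ --- which is exactly what is needed for $\langle R(A_n)\rtimes\mathrm{Inn}(S_n),\,h\rangle$ to be that semidirect product rather than something larger. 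Your computations $h r_\gamma h=\ell_{\gamma^{-1}}=r_{\gamma^{-1}}c_{\gamma^{-1}}$ (via the identity $\ell_g=r_g c_g$) and $h c_\sigma h=c_\sigma$ supply precisely this missing verification, and you correctly identify the left-versus-right translation subtlety as the real crux of the assembly step.
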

\begin{proof}
Let $G:=\mathrm{Aut}(CAG_n)$. Since both  $R(A_n)$ and $\mathrm{Aut}(A_n,S)$ are subgroups of $G$, we have $G\supseteq N_{G}(R(A_n))=R(A_n)\rtimes \mathrm{Aut}(A_n,S)$ (cf. \cite{Godsil}). For any $g\in \mathrm{Aut}(A_n,S)$, by Lemma \ref{lem-2-1} we may conclude that $g\in \mathrm{Inn}(S_n)$ because $g$ maps $3$-cycles to $3$-cycles. Thus $\mathrm{Aut}(A_n,S)\subseteq \mathrm{Inn}(S_n)$. Furthermore, we claim that each $g\in \mathrm{Inn}(S_n)$ ($\leq\mathrm{Aut}(A_n)$) fixes $S$ setwise because $S$ contains all $3$-cycles in $S_n$ and $g$ maps $3$-cycles to $3$-cycles, and hence $g\in \mathrm{Aut}(A_n,S)$. Therefore, $\mathrm{Aut}(A_n,S)=\mathrm{Inn}(S_n)\cong S_n$.

The map $h:\alpha\mapsto\alpha^{-1}$ ($\forall\alpha\in A_n$) fixes the identity vertex $e$ of $CAG_n$ and hence fixes its neighborhood $S$ setwise. We claim that $h\not\in R(A_n)\rtimes \mathrm{Inn}(S_n)$. Since otherwise $h=r_\beta\sigma_\gamma$, where $r_\beta\in R(A_n)$ and $\sigma_\gamma\in \mathrm{Inn}(S_n)$. This means that $r_\beta=r_e$ because both $h$ and $\sigma_\gamma$ fix the identity vertex $e$, and so $h=\sigma_\beta\in \mathrm{Inn}(S_n)=\mathrm{Aut}(A_n,S)$, which is impossible. Thus $G$ contains $H:=(R(A_n)\rtimes \mathrm{Inn}(S_n))\rtimes \mathbb{Z}_2$ as its subgroup, where $\mathbb{Z}_2=\langle h\rangle$, and $R(A_n)\rtimes \mathrm{Inn}(S_n)$ has index $2$ in $H$ and hence is a normal subgroup of $H$.

The proof is now complete.
\end{proof}
\begin{remark}\label{rem-2}
\emph{As  $\mathrm{Aut}(A_n,S)=\mathrm{Inn}(S_n)\leq (\mathrm{Aut}(CAG_n))_e$ acts transitively on the neighborhood $S$ of the identity vertex $e$, $CAG_n$ is arc-transitive because it is vertex-transitive.}
\end{remark}

\begin{remark}\label{rem-3}
\emph{Let $n\geq 5$, $n\neq 6$ and $4\leq k\leq n$, and let $T$ be the set of all $k$-cycles in $S_n$. As in the proof of Theorem \ref{thm-2-2}, from  Remark \ref{rem-1} and Lemma \ref{lem-2-3} one can obtain that  $\mathrm{Aut}(\mathrm{Cay}(\Gamma,T))\supseteq (R(\Gamma)\rtimes \mathrm{Inn}(S_n))\rtimes \mathbb{Z}_2\cong (\Gamma\rtimes S_n)\rtimes \mathbb{Z}_2$,  where $\Gamma=A_n$ if $k$ is odd, and $\Gamma=S_n$ if $k$ is even, and $\mathbb{Z}_2=\langle h\rangle$, where $h$ is the map $\alpha\mapsto\alpha^{-1}$ ($\forall \alpha\in \Gamma$).}
\end{remark}

In what follows, we will prove that $\mathrm{Aut}(CAG_n)$ has order at most $(n!)^2$ for $n\geq 5$, and so is exactly equal to the subgroup  given in Theorem \ref{thm-2-2}. The proof consists of a series of lemmas, so we present a figure (see Fig. \ref{fig-1}) to show that how step by step we achieve ultimately this goal and how those lemmas are related.

Recall that the symmetric group $S_n$ (resp. the alternating group $A_n$) can be viewed as the group of all (resp. even) permutations on the set $\{1,2,\ldots,n\}$. The \emph{support} of a permutation $\tau$, denoted by $\mathrm{supp}(\tau)$, is the set of points moved by $\tau$. The following lemma is critical to our main result.

\begin{figure}[h]
\centering
\small
\unitlength 2mm 
\linethickness{0.4pt}
\ifx\plotpoint\undefined\newsavebox{\plotpoint}\fi 
\begin{picture}(45,33)(0,0)
\put(0,30){\makebox(0,0)[cc]{Lemma \ref{lem-3-1}}}
\put(0,26){\makebox(0,0)[cc]{$\Downarrow$}}
\put(0,22){\makebox(0,0)[cc]{$A_e=\{1\}$ (Lemma \ref{lem-3-2})}}
\put(0,18){\makebox(0,0)[cc]{$\Downarrow$}}
\put(0,13){\makebox(0,0)[cc]{\begin{minipage}{5cm}
$G_e=G_e/A_e$ can be viewed as a permutation group on $S$.
\end{minipage}}}
\put(0,8){\makebox(0,0)[cc]{$\Downarrow$}}
\put(6.0,8){\makebox(0,0)[cc]{Lemma \ref{lem-3-3}}}
\put(0,1){\makebox(0,0)[cc]{\begin{minipage}{5cm}$G_e$ acts on $\Delta$, and $G_e/K$ can be viewed as a permutation group on $\Delta$.\end{minipage}}}
\put(22,1){\makebox(0,0)[cc]{$\xLongrightarrow[\mbox{Lemma~\ref{lem-3-4}}]{\mbox{Lemma~\ref{lem-3-5}}}$}}
\put(44,1){\makebox(0,0)[cc]{$G_e/K\leq\mathrm{Aut}(L_n)\cong S_n$}}
\put(44,8){\makebox(0,0)[cc]{$\Uparrow$}}
\put(50,8){\makebox(0,0)[cc]{Lemma \ref{lem-3-6}}}
\put(44,13){\makebox(0,0)[cc]{$G_e=|G_e/K|\cdot|K|\leq|S_n|\cdot|K|\leq 2n!$}}
\put(44,18){\makebox(0,0)[cc]{$\Uparrow$}}
\put(44,22){\makebox(0,0)[cc]{$|\mathrm{Aut}(CAG_n)|=|R(A_n)|\cdot|G_e|\leq (n!)^2$}}
\put(44,26){\makebox(0,0)[cc]{$\Uparrow$}}
\put(50.5,26){\makebox(0,0)[cc]{Theorem \ref{thm-2-2}}}
\put(44,30){\makebox(0,0)[cc]{\begin{minipage}{5.5cm}$\mathrm{Aut}(CAG_n)=(R(A_n)\rtimes \mathrm{Inn}(S_n))\rtimes \mathbb{Z}_2\cong (A_n\rtimes S_n)\rtimes \mathbb{Z}_2$\end{minipage}}}
\end{picture}
\vspace{0.3cm}
\caption{\small{Illustrating the proof of the main result.}\label{fig-1}}
\end{figure}
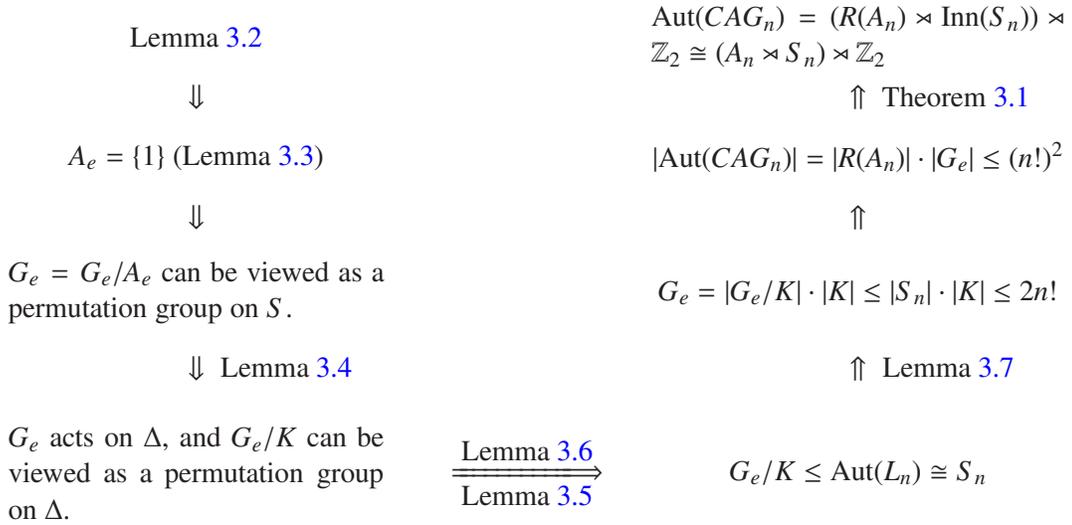

\begin{lem}\label{lem-3-1}
Let $S$ be the complete set of $3$-cycles in $S_n$ ($n\geq 5$). Let $\tau,\kappa\in S$, $\tau\neq \kappa$. Then $\tau\kappa=\kappa\tau\neq e$ if and only if there is a unique $4$-cycle in $CAG_n$ containing $e$, $\tau$ and $\kappa$.
\end{lem}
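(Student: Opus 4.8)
The plan is to translate both sides of the equivalence into statements about $3$-cycles and then count the $4$-cycles of $CAG_n$ through $e,\tau,\kappa$ by a factorisation argument. First I would dispose of the left-hand side: two distinct $3$-cycles $\tau,\kappa$ commute if and only if they have disjoint supports, or $\kappa=\tau^{-1}$ (the only other $3$-cycle on the same support), and in the latter case $\tau\kappa=e$. Hence the hypothesis $\tau\kappa=\kappa\tau\neq e$ is exactly the condition that $\tau$ and $\kappa$ are disjoint $3$-cycles, and it remains to prove that $\tau,\kappa$ being disjoint is equivalent to there being a unique $4$-cycle through $e,\tau,\kappa$. (For $n=5$ no disjoint pair exists, so both sides are vacuously false; the counts below are valid for all $n\geq 5$.)

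For the counting I would set up a uniform framework. Writing $u\sim v$ for adjacency, i.e.\ $uv^{-1}\in S$, note that $e\sim\tau$ and $e\sim\kappa$ always hold, so a $4$-cycle on $\{e,\tau,\kappa,\mu\}$ is determined by a fourth vertex $\mu$ together with the choice of which vertex is opposite $e$. This yields three families: when $\mu$ is opposite $e$, $\mu$ is a common neighbour of $\tau$ and $\kappa$; when $\kappa$ or $\tau$ is opposite $e$, the cycle forces the edge $\tau\kappa$, so those two families require $\tau\sim\kappa$. Since right multiplication is a graph automorphism of $CAG_n$, the common neighbours of $\tau$ and $\kappa$ are in bijection (via right multiplication by $\kappa^{-1}$) with those of $e$ and $\tau\kappa^{-1}$. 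Defining $c(g)$ to be the number of ordered factorisations $g=xy$ with $x,y\in S$, the number of $4$-cycles through $e,\tau,\kappa$ equals $(c(\tau\kappa^{-1})-1)+\epsilon\,(c(\tau)-1)$, where $\epsilon=2$ if $\tau\sim\kappa$ and $\epsilon=0$ otherwise, and where $c(\tau)=c(\kappa)=:c_0$ since $c$ is a class function. The technical engine for evaluating $c$ is a support lemma: if $g=xy$ with $x,y$ $3$-cycles, then every point of $\mathrm{supp}(x)\cup\mathrm{supp}(y)$ lying outside $\mathrm{supp}(g)$ must in fact lie in $\mathrm{supp}(x)\cap\mathrm{supp}(y)$. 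This bounds $|\mathrm{supp}(x)\cup\mathrm{supp}(y)|$ and reduces each value of $c(g)$ to a finite check in a small symmetric group.

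For the forward direction, if $\tau,\kappa$ are disjoint then $\tau\kappa^{-1}$ has cycle type $3+3$, which is not a $3$-cycle, so $\tau\not\sim\kappa$ and only the first family can occur. The support lemma forces the two factors of a type-$3+3$ element to be precisely its own two cycles, whence $c(\tau\kappa^{-1})=2$; thus the only common neighbours of $\tau$ and $\kappa$ are $e$ and $\tau\kappa$, and there is exactly one $4$-cycle, on the vertices $e,\tau,\kappa,\tau\kappa$.

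The converse is where the real work lies, and it is the step I expect to be the main obstacle. Here $\tau,\kappa$ share $3$, $2$, or $1$ point, and I would treat these separately. If they share all three points then $\kappa=\tau^{-1}$, $\tau\sim\kappa$, and $\tau\kappa^{-1}$ is a $3$-cycle, giving total $3(c_0-1)$. If they share exactly two points, a short computation shows $\tau\kappa^{-1}$ is either a $3$-cycle (then $\tau\sim\kappa$ and the total is again $3(c_0-1)$) or a double transposition (then $\tau\not\sim\kappa$ and the total is $c(2+2)-1$). If they share exactly one point, $\tau\kappa^{-1}$ is a $5$-cycle and $\tau\not\sim\kappa$, so the total is $c(5\text{-cycle})-1$. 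Using the support lemma I would compute $c_0=3n-8$, $c(2+2)=8$, and $c(5\text{-cycle})=5$, so the totals are a positive multiple of $3$, or $7$, or $4$ respectively — in every non-disjoint case never equal to $1$. Together with the forward direction this gives the equivalence. The delicate points are the orientation bookkeeping in the factorisation counts (in particular distinguishing the two two-point-overlap subcases by the cycle type of $\tau\kappa^{-1}$) and verifying that the three families of $4$-cycles are genuinely disjoint, so that the totals add without overcounting.
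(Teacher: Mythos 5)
Your proposal is correct, and its computational engine is the same as the paper's: common neighbours of $\tau$ and $\kappa$ are identified, via right translation, with ordered factorisations of $\tau\kappa^{-1}$ into two $3$-cycles, and the case analysis runs over $|\mathrm{supp}(\tau)\cap\mathrm{supp}(\kappa)|$; your values $c_0=3n-8$, $c(2{+}2)=8$, $c(5\text{-cycle})=5$, $c(3{+}3)=2$ agree exactly with the paper's enumerations (Table 1). Where you genuinely differ is in two places, and both are improvements. First, your three-family classification of the $4$-cycles through $e,\tau,\kappa$ (by which vertex is antipodal to $e$) is more complete than the paper's accounting: the paper only ever counts cycles of your first family, namely $(e,\tau,\omega,\kappa)$ with $\omega\in N(\tau)\cap N(\kappa)\setminus\{e\}$, and never considers cycles that use the edge $\tau\kappa$. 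This is harmless for the lemma --- in the forward direction $\tau\not\sim\kappa$ because $\tau\kappa^{-1}$ has cycle type $3+3$, so your families two and three are empty there, and in the converse an undercount already exceeding $1$ still exceeds $1$ --- but it does mean the paper's claimed exact counts of $3n-9$ in Subcase 2.1 and in the case $\kappa=\tau^{-1}$ are really only partial counts: in those cases $\tau\sim\kappa$, and your total $3(3n-9)$, which includes the $2(c_0-1)$ cycles through the edge $\tau\kappa$, is the true number. Second, your support lemma (points of $\mathrm{supp}(x)\cup\mathrm{supp}(y)$ outside $\mathrm{supp}(xy)$ must lie in $\mathrm{supp}(x)\cap\mathrm{supp}(y)$) states once, in general form, what the paper re-derives ad hoc inside each case; it is correct (if $p\notin\mathrm{supp}(y)$ then $x(p)=p$, and if $p\notin\mathrm{supp}(x)$ then injectivity of $x$ forces $y(p)=p$), and it is what makes each value of $c(g)$ a routine finite check and the forward direction immediate. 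If you write this up fully, the only point needing explicit care is the one you already flag: the three families are pairwise disjoint as subgraphs, since a family-one cycle omits the edge $\tau\kappa$ while families two and three contain it and are distinguished by which of $\tau,\kappa$ is antipodal to $e$.
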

\begin{proof}
Suppose $\tau\kappa=\kappa\tau\neq e$. Then $\tau^{-1}\kappa \tau=\kappa$. It follows that $\mathrm{supp}(\tau)\cap\mathrm{supp}(\kappa)=\emptyset$. Clearly, $\tau\kappa=\kappa\tau$ is a common neighbor of the vertices $\tau$ and $\kappa$ other than $e$. Let $\omega$ be another  common neighbor of them. There exists $x,y\in S$ such that $x\tau=y\kappa=\omega$. Note that $x\tau=y\kappa$ iff $\tau\kappa^{-1}=x^{-1}y$. Since $\mathrm{supp}(\tau)\cap\mathrm{supp}(\kappa^{-1})=\emptyset$, $\tau\kappa^{-1}=x^{-1}y$ iff $\tau=x^{-1}$ and $\kappa^{-1}=y$ or $\tau=y$ and $\kappa^{-1}=x^{-1}$, i.e., $x=\tau^{-1}$ and $y=\kappa^{-1}$ or $x=\kappa$ and $y=\tau$. Thus $\omega=e$ or $\omega=\tau\kappa$. Hence, there exists a unique $4$-cycle in $CAG_n$ containing $e$, $\tau$ and $k$, namely the cycle $(e,\tau,\tau\kappa=\kappa\tau,k,e)$.

To prove the converse, suppose $\tau\kappa\neq\kappa\tau$ or $\tau\kappa=\kappa\tau=e$. First we suppose that $\tau\kappa\neq\kappa\tau$. Then $|\mathrm{supp}(\tau)\cap\mathrm{supp}(\kappa)|=1$ or $2$.  We  consider the following two cases:

\noindent{\bf Case 1.}  $|\mathrm{supp}(\tau)\cap\mathrm{supp}(\kappa)|=1$;

Without loss of generality, let  $\tau=(1,2,3)$ and $\kappa=(1,4,5)$, and let $\omega$ be a common neighbor of $\tau$ and $\kappa$. Then $\omega=x\tau=y\kappa$ for some $x,y\in S$. As before, $x\tau=y\kappa$ iff $x^{-1}y=\tau\kappa^{-1}=(1,2,3)(1,5,4)=(1,2,3,5,4)$. Observe that if we decompose $(1,2,3,5,4)$ as the product of two $3$-cycles,  the supports of these two $3$-cycles must lie in $\{1,2,3,4,5\}$ and contain exactly one common point. Using this fact, by simple computation we find that the only ways to decompose $(1,2,3,5,4)$ as a product of two $3$-cycles are
\begin{align*}
(1,2,3,5,4)&=(1,2,3)(1,5,4)=(2,3,5)(2,4,1)=(3,5,4)(3,1,2)\\
&=(5,4,1)(5,2,3)=(4,1,2)(4,3,5).
\end{align*}
Thus we must have $x=(1,3,2)$ and  $y=(1,5,4)$, $x=(2, 5,3)$ and  $y=(2,4,1)$, $x=(3,4,5)$ and  $y=(3,1,2)$, $x=(5,1,4)$ and  $y=(5,2,3)$ or $x=(4,2,1)$ and  $y=(4,3,5)$. Hence, we have
\begin{equation}\label{equ-1}
\omega\in N((1,2,3))\cap N((1,4,5))=\{e,(1,2,5),(1,2,3,4,5),(1,4,5,2,3),(1,4,3)\}.
\end{equation}
Therefore, there are exactly four $4$-cycles containing $e$, $\tau$ and $\kappa$.

\noindent{\bf Case 2.} $|\mathrm{supp}(\tau)\cap\mathrm{supp}(\kappa)|=2$.

Without loss of generality, we just need to consider the following two situations.

{\bf Subcase 2.1.} $\tau=(1,2,3)$ and $\kappa=(1,2,4)$;

Let $\omega$ be a common neighbor of $\tau$ and $\kappa$. We have $\omega=x\tau=y\kappa$ for some $x,y\in S$. As before, $x\tau=y\kappa$ iff $x^{-1}y=\tau\kappa^{-1}=(1,2,3)(1,4,2)=(2,3,4)$. Observe that if we decompose $(2,3,4)$ as the product of two $3$-cycles,  the supports of these two $3$-cycles must contain at least two common points. Using this fact, by simple computation we see that the only ways to decompose $(2,3,4)$ as a product of two $3$-cycles are
\begin{align*}
(2,3,4)&=(1,2,3)(1,4,2)=(1,4,2)(1,3,4)=(1,3,4)(1,2,3)=(2,4,3)(2,4,3)\\
&=(i,2,3)(i,4,2)=(i,4,2)(i,3,4)=(i,3,4)(i,2,3),
\end{align*}
where $5\leq i\leq n$. Thus we must have $x=(1,3,2)$ and $y=(1,4,2)$, $x=(1,2,4)$ and $y=(1,3,4)$,  $x=(1,4,3)$ and $y=(1,2,3)$,  $x=(2,3,4)$ and $y=(2,4,3)$,  $x=(i,3,2)$ and $y=(i,4,2)$,  $x=(i,2,4)$ and $y=(i,3,4)$ or $x=(i,4,3)$ and $y=(i,2,3)$, whence $\omega=e$, $(1,3)(2,4)$, $(1,4)(2,3)$, $(1,2)(3,4)$, $(1,2,i)$, $(1,2,4,i,3)$ or $(1,2,3,i,4)$ for $5\leq i\leq n$. This implies that $\tau$ and $\kappa$ have exactly $3n-8$ common neighbors (containing $e$). Therefore, the number of $4$-cycles containing $e$, $\tau$ and $\kappa$ is equal to $3n-9$, which is greater than $1$ due to $n\geq 5$.

{\bf Subcase 2.2.} $\tau=(1,2,3)$ and $\kappa=(1,4,2)$.

Let $\omega$ be a common neighbor of $\tau$ and $\kappa$. We have $\omega=x\tau=y\kappa$ for some $x,y\in S$. As before, $x\tau=y\kappa$ iff $x^{-1}y=\tau\kappa^{-1}=(1,2,3)(1,2,4)=(1,4)(2,3)$. Observe that if we decompose $(1,4)(2,3)$ as the product of two $3$-cycles,  the supports of these two $3$-cycles must contain exactly two common points, and these two common points must belong to $\{1,2,3,4\}$. Using this fact, by simple computation we see that the only ways to decompose $(1,4)(2,3)$ as a product of two $3$-cycles are
\begin{align*}
(1,4)(2,3)&=(1,2,3)(1,2,4)=(1,3,2)(1,3,4)=(1,4,2)(1,3,2)=(1,2,4)(2,4,3)\\
&=(1,4,3)(1,2,3)=(1,3,4)(2,3,4)=(2,4,3)(1,4,3)=(2,3,4)(1,4,2).
\end{align*}
Thus we must have $x=(1,3,2)$ and $y=(1,2,4)$,  $x=(1,2,3)$ and $y=(1,3,4)$,  $x=(1,2,4)$ and $y=(1,3,2)$, $x=(1,4,2)$ and $y=(2,4,3)$, $x=(1,3,4)$ and $y=(1,2,3)$,  $x=(1,4,3)$ and $y=(2,3,4)$, $x=(2,3,4)$ and $y=(1,4,3)$ or $x=(2,4,3)$ and $y=(1,4,2)$. Hence, we have
\begin{align}\label{equ-2}
\omega&\in N((1,2,3))\cap N((1,4,2))\nonumber\\
&=\{e,(1,3,2),  (1,3)(2,4),  (1,4,3), (2,3,4), (1,4)(2,3), (1,2)(3,4), (1,2,4)\}.
\end{align}
This implies that $\tau$ and $\kappa$ have exactly eight common neighbors (containing $e$). Therefore, there are exactly seven $4$-cycles containing $e$, $\tau$ and $\kappa$.

Next we suppose that $\tau\kappa=\kappa\tau=e$. Then $\tau=\kappa^{-1}$. Without loss of generality, let $\tau=(1,2,3)$ and  $\kappa=(1,3,2)$, and let $\omega$ be a common neighbor of $\tau$ and $\kappa$. We have $\omega=x\tau=y\kappa$ for some $x,y\in S$. As before, $x\tau=y\kappa$ iff $x^{-1}y=\tau\kappa^{-1}=(1,2,3)(1,2,3)=(1,3,2)$. As in Subcase 2.1, we see that the only ways to decompose $(1,3,2)$ as a product of two $3$-cycles are \begin{align*}
(1,3,2)=(1,2,3)(1,2,3)=(i, 1,3)(i,2, 1)=(i,3, 2)(i, 1,3)=(i,2, 1)(i,3, 2),
\end{align*}
where $4\leq i\leq n$. Thus we  have $x=(1,3,2)$ and  $y=(1,2,3)$, $x=(i,3,1)$ and $y=(i,2,1)$, $x=(i,2, 3)$ and $y=(i,1,3)$ or $x=(i,1,2)$ and $y=(i,3, 2)$, whence $\omega=e,(1,i)(2,3),(1,2)(3,i)$ or $(1,3)(2,i)$ for  $4\leq i\leq n$. This implies that $\tau$ and $\kappa$ have exactly $3n-8$ common neighbors (containing $e$). Therefore, the number of $4$-cycles containing $e$, $\tau$ and $\kappa$ is equal to $3n-9$, which is greater than $1$ due to $n\geq 5$.

The proof is now complete.
\end{proof}

\begin{remark}\label{rem-4}
\emph{In fact, as in  the proof of Lemma \ref{lem-3-1}, we have obtained  the common neighbors of any two vertices in $S$ (see Tab. \ref{tab-1}).}
\end{remark}
\begin{table}[t]
\caption{\label{tab-1}\small{Common neighbors of the vertices in $S$.}}
\resizebox{!}{1.21cm}{
\begin{tabular*}{23.0cm}{@{\extracolsep{\fill}}ccccc}
\toprule
$\tau$ &$\kappa$&$|\mathrm{supp}(\tau)\cap \mathrm{supp}(\kappa)|$&$N(\tau)\cap N(\kappa)$&$|N(\tau)\cap N(\kappa)|$\\
 \midrule
$(i,j,k)$ & $(p,q,r)$ &$0$& $\{e,(i,j,k)(p,q,r)\}$ & $2$\\
$(i,j,k)$ & $(i,p,q)$ &$1$& $\{e,(i,p,k),(i,p,q,j,k),(i,j,q),(i,j,k,p,q)$\} & $5$\\
$(i,j,k)$ & $(i,j,l)$ &$2$& $\{e, (i,k)(j,l),(i,l)(j,k), (i,j)(k,l), (i,j,m), (i,j,l,m,k),(i,j,k,m,l)\mid m\not\in\{i,j,k,l\}\}$ & $3n-8$\\
$(i,j,k)$ & $(i,l,j)$ &$2$& $\{e,(i,k,j),(i,k)(j,l),(i,l,k),(j,k,l),(i,l)(j,k),(i,j)(k,l),(i,j,l)\}$ & $8$\\
$(i,j,k)$ & $(i,k,j)$ &$3$& $\{e,(i,l)(j,k),(i,j)(k,l),(i,k)(j,l)\mid l\not\in\{i,j,k\}\}$ & $3n-8$\\
  \bottomrule
\end{tabular*}}
\end{table}

Let $N_\ell(e)$ denote the set of vertices in $CAG_n$ whose distance to the identity vertex $e$ is exactly $\ell$. Thus $N_0(e)=\{e\}$ and $N_1(e)=S$. Also, for any $\gamma\in N_2(e)$, there exists some $\alpha,\beta\in S$ such that $\gamma=\alpha\beta$. By discussing the possible values of $|\mathrm{supp}(\alpha)\cap\mathrm{supp}(\beta)|$ as in the proof of Lemma \ref{lem-3-1}, and using the fact $\gamma\not=e$ and $\gamma\not\in N_1(e)=S$, we may conclude that $\gamma$ is of one of the three types: $\gamma=(i,j,k)(p,q,r)$, $\gamma=(i,j,k,p,q)$ or $\gamma=(i,j)(p,q)$. Moreover, it is easy to see that each permutation having one of these types must lie in $N_2(e)$.  Hence,  $N_2(e)=N_2^1(e)\cup N_2^2(e)\cup N_2^3(e)$, where $N_2^1(e)=\{(i,j,k)(p,q,r)\mid 1\leq i,j,k,p,q,r\leq n, \{i,j,k\}\cap\{p,q,r\}=\emptyset\}$,  $N_2^2(e)=\{(i,j,k,p,q)\mid 1\leq i,j,k,p,q\leq n\}$ and $N_2^3(e)=\{(i,j)(p,q)\mid 1\leq i,j,p,q\leq n, \{i,j\}\cap\{p,q\}=\emptyset\}$. Clearly, $N_2^1(e)=\emptyset$ if $n=5$. The following lemma shows that the automorphisms of $CAG_n$ that fixes the identity vertex $e$ and  all vertices in $N_1(e)$ must be the identity automorphism.

\begin{lem}\label{lem-3-2}
Let $S$ be the complete set of $3$-cycles in $S_n$ ($n\geq 5$), and let $CAG_n=\mathrm{Cay}(A_n,S)$ be the complete alternating group graph. Let $A_e$ denote the set of automorphisms of $CAG_n$ that fixes the identity vertex $e$ and each of its neighbors. Then $A_e=\{1\}$.
\end{lem}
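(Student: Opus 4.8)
The plan is to prove that every $\phi\in A_e$ fixes all of $CAG_n$ and hence $A_e=\{1\}$. Since $\phi$ fixes the identity vertex $e$, it preserves graph distance to $e$ and therefore stabilises each layer $N_\ell(e)$ setwise. As $CAG_n$ is connected, it suffices to show by induction on $\ell$ that $\phi$ fixes $N_\ell(e)$ pointwise, the cases $\ell=0$ and $\ell=1$ being exactly the defining hypotheses of $A_e$. The inductive engine is a simple reduction: assuming $\phi$ fixes $\bigcup_{i=0}^{\ell}N_i(e)$ pointwise and taking $\gamma\in N_{\ell+1}(e)$, the image $\phi(\gamma)$ again lies in $N_{\ell+1}(e)$ and has exactly the same neighbours in $N_\ell(e)$ as $\gamma$, those neighbours being fixed. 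Hence it is enough to show that distinct vertices of $N_{\ell+1}(e)$ have distinct neighbourhoods inside $N_\ell(e)$; this forces $\phi(\gamma)=\gamma$.

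The crux is the first new layer $\ell=1$, where I would show that each vertex of $N_2(e)$ is determined by its neighbourhood in $S$, treating the three types $N_2^1(e),N_2^2(e),N_2^3(e)$ separately. For a Type-$1$ vertex $(i,j,k)(p,q,r)$ this is immediate from Lemma \ref{lem-3-1}: it is the unique common neighbour other than $e$ of the two disjoint (hence commuting) $3$-cycles $(i,j,k)$ and $(p,q,r)$, all three of which are fixed by $\phi$, so the fourth vertex of that unique $4$-cycle is fixed as well. For the Type-$2$ vertices ($5$-cycles) and Type-$3$ vertices (double transpositions) I would invoke the explicit common-neighbour data recorded in Tab. \ref{tab-1}: first recover $\mathrm{supp}(\gamma)$ as the union of the supports of the $3$-cycles lying in $N(\gamma)\cap S$, and then distinguish the finitely many $5$-cycles (respectively double transpositions) supported on that set by examining precisely which elements of $S$ are adjacent to $\gamma$.

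For the higher layers ($\ell\ge 2$) I would transport this analysis along the group. Since every right translation $r_g$ is an automorphism of the Cayley graph $CAG_n$, the local configuration around any fixed vertex $u$ is carried to the configuration around $e$ by $r_{u^{-1}}$, so the common-neighbour computations of Lemma \ref{lem-3-1} and Tab. \ref{tab-1} remain valid with any fixed vertex playing the role of $e$. Using this, for $\gamma\in N_{\ell+1}(e)$ I would select adjacent fixed vertices $u,w\in\bigcup_{i=0}^{\ell}N_i(e)$ having $\gamma$ as a common neighbour, arranged so that their remaining relevant common neighbours already lie in $\bigcup_{i=0}^{\ell}N_i(e)$ and are thus fixed; this leaves $\gamma$ as the only possible image, so $\phi(\gamma)=\gamma$.

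The main obstacle is the base case for Types $2$ and $3$. Unlike Type $1$, here no single pair of fixed neighbours pins $\gamma$ down as a unique non-trivial common neighbour, so one must combine the information coming from several elements of $S$ (the support-recovery step followed by the distinguishing step). This difficulty is sharpest when $n=5$, where $N_2^1(e)=\emptyset$ and the clean $4$-cycle argument is vacuous, so that the whole base case rests on the $5$-cycle and double-transposition analysis.
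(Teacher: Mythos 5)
Your base case contains a genuine gap at the Type-3 vertices. You propose to pin down a double transposition $\gamma\in N_2^3(e)$ by ``examining precisely which elements of $S$ are adjacent to $\gamma$,'' but this cannot work: the three double transpositions $(1,2)(3,4)$, $(1,3)(2,4)$, $(1,4)(2,3)$ have \emph{identical} neighbourhoods in $S$, namely all eight $3$-cycles supported on $\{1,2,3,4\}$. Indeed, the decomposition list in Subcase 2.2 of the proof of Lemma \ref{lem-3-1} (equivalently, Eq. (\ref{equ-2}) and Tab. \ref{tab-1}) shows that every $3$-cycle on $\{1,2,3,4\}$ occurs as a right factor of $(1,4)(2,3)$ written as a product of two $3$-cycles, and a support count shows no $3$-cycle moving a point outside $\{1,2,3,4\}$ can be such a factor; conjugating by a transposition of $\{1,2,3,4\}$ gives the same conclusion for the other two double transpositions. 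Consequently an automorphism fixing $e$ and all of $S$ pointwise could, as far as $S$-adjacency data is concerned, permute $(1,2)(3,4)$, $(1,3)(2,4)$, $(1,4)(2,3)$ arbitrarily, and your governing reduction --- ``distinct vertices of $N_{\ell+1}(e)$ have distinct neighbourhoods inside $N_\ell(e)$'' --- is already false at $\ell=1$. (Your Type-1 and Type-2 arguments are fine and match the paper's.)

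The repair, which is what the paper does, is to exploit the order of the cases \emph{within} $N_2(e)$: first fix all $5$-cycles (Type 2) using $S$-neighbourhoods, and then separate the double transpositions using an already-fixed Type-2 vertex, e.g.\ $(1,2,3,4,5)$ is adjacent to $(1,2)(3,4)$ but not to $(1,3)(2,4)$ or $(1,4)(2,3)$; so the distinguishing witnesses for Type 3 must come from $N_2(e)$ itself, not from $N_1(e)$. Separately, your treatment of the layers $\ell\ge 2$ is only an unproven existence claim (that one can choose fixed $u,w$ whose \emph{other} common neighbours all lie in lower layers); this would need real work, and the paper sidesteps it entirely with a translation trick: once $g\in A_e$ is known to fix $N_0(e)\cup N_1(e)\cup N_2(e)$ pointwise, one checks that $r_y\,g\,r_{y^{-1}}\in A_e$ for every $y\in S$, and applying the $N_2$-conclusion to these conjugates yields $(xzy)^g=xzy$ for all $x,z,y\in S$, i.e.\ $g$ fixes $N_3(e)$, and inductively the whole connected graph. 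I recommend adopting both of these fixes rather than trying to salvage the ``distinct lower neighbourhoods'' scheme.
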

\begin{proof}
Let $g\in A_e$. We shall prove that $g$ fixes each vertex  $\alpha\in N_2(e)=N_2^1(e)\cup N_2^2(e)\cup N_2^3(e)$.

First suppose $\alpha\in N_2^1(e)$. Without loss of generality, we take $\alpha=(1,2,3)(4,5,6)=(4,5,6)(1,2,3)$.  By Lemma \ref{lem-3-1}, there is a unique $4$-cycle containing $e$, $(1,2,3)$ and $(4,5,6)$, namely $(e,(1,2,3),\alpha,(4,5,6))$. Thus $g$ fixes $\alpha$ because $(1,2,3)$, $(4,5,6)\in S=N_1(e)$.

Now suppose $\alpha\in N_2^2(e)$. Without loss of generality, we take $\alpha=(1,2,3,4,5)$. Note that $\alpha\in N((1,2,3))\cap N((1,4,5))=\{e,(1,2,5),(1,2,3,4,5),(1,4,5,2,3),(1,4,3)\}$ (see  Eq. (\ref{equ-1}) or Tab. \ref{tab-1}). Since $g$ fixes $e$ and each vertex in $S=N_1(e)$, we have $\alpha^g=\alpha$ or $\alpha^g=(1,4,5,2,3)$. We claim that the later case never occurs. This is because $\alpha$ has a neighbor $(3,4,5)$ which is fixed by $g$ but $(3,4,5)\not\in N((1,4,5,2,3))$. Hence, each vertex in $N_2^2(e)$ is fixed by $g$.

Finally, suppose $\alpha\in N_2^3(e)$. Without loss of generality, we take $\alpha=(1,2)(3,4)$. From Eq. (\ref{equ-2}) we see that $\alpha\in N((1,2,3))\cap N((1,4,2))=\{e,(1,3,2), (1,3)(2,4),  (1,4,3), (2,3,4),$ $(1,4)(2,3), (1,2)(3,4), (1,2,4)\}$. It follows that $g$ fixes $\{(1,3)(2,4),(1,4)(2,3),(1,2)(3,4)\}$ setwise because $g$ has fixed $e$ and each vertex in $S=N_1(e)$.  We claim that $g$ must fix $\alpha=(1,2)(3,4)$. This is because $\alpha$ has a neighbor $(1,2,3,4,5)\in N_2^2(e)$ which is fixed by $g$ but $(1,2,3,4,5)\not\in N((1,3)(2,4))$ and  $(1,2,3,4,5)\not\in N((1,4)(2,3))$. Hence, each vertex in $N_2^3(e)$ is fixed by $g$.

Therefore, we conclude that $g\in A_e$ fixes all  vertices in $N_2(e)$. Thus we have $(xy)^{g}=xy$ for  $x,y\in S$, which leads to $r_yg r_{y^{-1}}\in A_e$ and then $(xz)^{r_yg r_{y^{-1}}}=xz$ for $x,z\in S$ by the above arguments. It follows that $(xzy)^g=xzy$ and thus $g$ fixes each vertex in $N_3(e)$. Since $CAG_n$ is  connected,   $g$ must fix all vertices of $CAG_n$ by applying induction, and hence $A_e=\{1\}$.

The proof is now complete.
\end{proof}

\begin{lem}\label{lem-3-3}
Let $G_e$ denote the set of automorphisms of $CAG_n=\mathrm{Cay}(A_n,S)$ ($n\geq 5$) that fixes the identity vertex $e$. Let $g\in G_e$, and let $\alpha,\beta$ (not necessarily distinct) be two elements  of $S$. If $\alpha^g=\beta$, then $(\alpha^{-1})^g=\beta^{-1}$.
\end{lem}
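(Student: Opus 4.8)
The plan is to give a purely graph-theoretic description of the vertex $\alpha^{-1}$ in terms of the vertex $\alpha$ and the identity vertex $e$, one that any automorphism fixing $e$ must respect. Since $g$ fixes $e$, it permutes the neighborhood $N_1(e)=S$ of $e$; in particular $g$ preserves the properties ``being adjacent to $e$'', ``being a common neighbor of two given vertices'', and ``being a common neighbor that is itself adjacent to $e$''. The strategy is therefore to isolate $\alpha^{-1}$ from all other elements of $S$ by counting common neighbors and by detecting whether any common neighbor lies in $S$ (note that $e\notin S$, so the identity never counts here).

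Concretely, I would prove the following characterization: for $\alpha\in S$, the vertex $\alpha^{-1}$ is the unique $\kappa\in S$ with $\kappa\neq\alpha$ such that (i) $\alpha$ and $\kappa$ have no common neighbor lying in $S=N_1(e)$, and (ii) $\alpha$ and $\kappa$ have more than two common neighbors in $CAG_n$. To verify this, I would run through the five cases of Tab.~\ref{tab-1} according to $|\mathrm{supp}(\tau)\cap\mathrm{supp}(\kappa)|$. Two distinct $3$-cycles have the same support if and only if they are mutually inverse, so $\alpha^{-1}$ is exactly the $|\mathrm{supp}(\tau)\cap\mathrm{supp}(\kappa)|=3$ partner of $\alpha$. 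From the common-neighbor lists one reads off that the support-$1$ case and both support-$2$ cases always produce at least one $3$-cycle among the common neighbors (for instance $(i,p,k)$, $(i,j,m)$, or $(i,k,j)$), so these pairs fail (i); the disjoint (support-$0$) case yields no common neighbor in $S$ but has exactly two common neighbors, so it fails (ii); and the inverse (support-$3$) case yields only the identity together with double transpositions as common neighbors, hence none in $S$, while $|N(\tau)\cap N(\kappa)|=3n-8>2$ for $n\geq5$. Thus (i) and (ii) single out precisely $\alpha^{-1}$.

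Granting this, the conclusion is immediate: both conditions are invariant under $g$, so $g$ must send the unique vertex satisfying (i) and (ii) relative to $\alpha$ to the unique vertex satisfying (i) and (ii) relative to $\alpha^g=\beta$; that is, $(\alpha^{-1})^g=\beta^{-1}$, as required (the argument is unaffected by whether $\alpha=\beta$). The only delicate point is the verification of uniqueness in the characterization, and within that the real obstacle is separating the inverse pair (support $3$) from the disjoint pair (support $0$): both have no common neighbor in $S$, and the separation rests on the numerical gap between their total common-neighbor counts, namely $3n-8$ versus $2$, which is exactly why the hypothesis $n\geq5$ enters. All the needed common-neighbor data are already supplied by Lemma~\ref{lem-3-1} and Tab.~\ref{tab-1}, so the case analysis is routine once the characterization is set up.
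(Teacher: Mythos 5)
Your proof is correct, but it takes a different route from the paper's. The paper's argument is partly algebraic: from $\alpha^g=\beta$ it forms the translated automorphism $r_{\alpha}\,g\,r_{\beta^{-1}}\in G_e$, and since that element preserves $S$ setwise it forces $(\alpha^{-1})^g\beta\in S$, i.e.\ $(\alpha^{-1})^g=(p,q,r)\beta$ for some $3$-cycle $(p,q,r)$; a short support analysis then leaves only two candidates for $(\alpha^{-1})^g$, namely $\beta^{-1}$ or a $3$-cycle meeting $\mathrm{supp}(\beta)$ in two points, and the second is eliminated by exactly the observation at the heart of your condition (i): $N(\alpha)\cap N(\alpha^{-1})$ contains no $3$-cycles, while the common neighborhood of the bad candidate pair does, contradicting that $g^{-1}$ preserves $S$. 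Your proof dispenses with the translation trick entirely and instead gives a global, automorphism-invariant characterization of the inverse pairing: $\alpha^{-1}$ is the unique $\kappa\in S\setminus\{\alpha\}$ with no common neighbor in $S$ and more than two common neighbors, verified row by row from Tab.~\ref{tab-1}. The price is that you must also separate the inverse (support-$3$) case from the disjoint (support-$0$) case via the count $3n-8>2$ --- a case the paper never confronts, since a product of two disjoint $3$-cycles cannot be a $3$-cycle and so never survives its algebraic reduction. What your version buys is transparency and reusability: it exhibits the pairing $\{\alpha,\alpha^{-1}\}$ as a purely graph-theoretic invariant of $(CAG_n,e)$, which makes the subsequent formation of the blocks $\Delta_{i,j,k}$ in the paper immediate, whereas the paper's version is shorter on case checking because the right-regular-representation conjugation does most of the elimination for free.
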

\begin{proof}

Without loss of generality, take $\alpha=(1,2,3)$ and $\beta=(i,j,k)$. (Noting that here $\beta=\alpha$, $\beta=\alpha^{-1}$ or $\beta\not\in\{\alpha,\alpha^{-1}\}$.) By the assumption, $(1,2,3)^g=(i,j,k)$, so $r_{(1,2,3)}gr_{(i,j,k)^{-1}}\in G_e$. It follow that
$$
(1,2,3)^{r_{(1,2,3)}gr_{(i,j,k)^{-1}}}=(1,3,2)^g(i,k,j)\in S
$$
because $r_{(1,2,3)}gr_{(i,j,k)^{-1}}$ fixes $S$ setwise. Then there exists $(p,q,r)\in S$ such that $(1,3,2)^g$ $(i,k,j)=(p,q,r)$. Since $g\in G_e$, we have $(1,3,2)^g=(p,q,r)(i,j,k)\in S$. This implies that $|\mathrm{supp}(p,q,r)\cap \mathrm{supp}(i,j,k)|=3$ or $2$. If the former occurs, we have  $(p,q,r)=(i,j,k)$, and so $(1,3,2)^g=(i,k,j)$. If the latter occurs, without loss of generality, say $\{p,q\}=\{j,k\}$ and $r\not\in\{i,j,k\}$, then $(p,q,r)$ must be $(k,j,r)$, and so $(1,3,2)^g=(k,j,r)(i,j,k)=(i,j,r)$. Note that $U=N((1,2,3))\cap N((1,3,2))=\{e, (1,i)(2,3), (1,2)(3,i), (1,3)(2,i)\mid 4\leq i\leq n\}$ and $W=N((1,2,3)^g)\cap N((1,3,2)^g)=N((i,j,k))\cap N((i,j,r))=\{e, (i,k)(j,r),(i,r)(j,k),$ $(i,j)(k,r), (i,j,m), (i,j,r,m,k),(i,j,k,m,r)\mid m\not\in\{i,j,k,r\}\}$ (see Tab. \ref{tab-1}). Since $g$ sends $U$ to $W$, for any fixed $m\not\in\{i,j,k,r\}$, there exists some element $\gamma\in U$ such that $\gamma^g=(i,j,m)\in W$, and so $\gamma=(i,j,m)^{g^{-1}}\in S$, which is impossible since $U$ contains no $3$-cycles. Therefore, if $\alpha^g=\beta$, then $(\alpha^{-1})^g=\beta^{-1}$.

The proof is now complete.
\end{proof}

For $1\leq i<j<k\leq n$, set $\Delta_{i,j,k}=\{(i,j,k),(i,k,j)\}$. Let $\Delta=\{\Delta_{i,j,k}\mid 1\leq i<j<k\leq n\}$. We define $L_n$ as the graph with vertex set  $\Delta$, and with an edge connecting $\Delta_{i,j,k}$ and $\Delta_{p,q,r}$ iff  $\{i,j,k\}\cap\{p,q,r\}= \emptyset$, or equivalently, $\Delta_{i,j,k}\cap\Delta_{p,q,r}=\emptyset$. It is easy to see that $L_n$ is just the  Kneser graph $KG(n, l)$ for $l=3$. By Lemma \ref{lem-2-2}, we know  that $\mathrm{Aut}(KG(n,l))\cong S_n$ for $n\geq 2l+1$. Thus we have

\begin{lem}\label{lem-3-4}
For $n\geq 7$, $\mathrm{Aut}(L_n)\cong S_n$.
\end{lem}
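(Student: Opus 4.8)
The plan is to reduce the statement entirely to the Kneser graph computation already recorded in Lemma \ref{lem-2-2}. The observation that drives everything is that the vertex set $\Delta$ of $L_n$ is in a completely natural bijection with the collection of $3$-element subsets of $\{1,2,\ldots,n\}$: each block $\Delta_{i,j,k}=\{(i,j,k),(i,k,j)\}$ consists of the two $3$-cycles with support exactly $\{i,j,k\}$, so the assignment $\Phi\colon \Delta_{i,j,k}\mapsto\{i,j,k\}$ is a well-defined bijection onto $\binom{\{1,\ldots,n\}}{3}$. This is the one genuine idea, and it is essentially bookkeeping: a block is determined by, and determines, its underlying $3$-set.

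Next I would verify that $\Phi$ is a graph isomorphism from $L_n$ onto the Kneser graph $KG(n,3)$. By definition $\Delta_{i,j,k}$ and $\Delta_{p,q,r}$ are adjacent in $L_n$ precisely when $\{i,j,k\}\cap\{p,q,r\}=\emptyset$, which is exactly the condition that the images $\Phi(\Delta_{i,j,k})=\{i,j,k\}$ and $\Phi(\Delta_{p,q,r})=\{p,q,r\}$ are disjoint, i.e.\ adjacent in $KG(n,3)$. Hence adjacency is preserved and reflected, so $L_n\cong KG(n,3)$ and therefore $\mathrm{Aut}(L_n)\cong\mathrm{Aut}(KG(n,3))$.

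Finally I would invoke Lemma \ref{lem-2-2} with $r=3$. Its hypothesis $n\geq 2r+1$ becomes $n\geq 7$, which is exactly the range asserted in the lemma, and it yields $\mathrm{Aut}(KG(n,3))\cong S_n$. Combining the two displayed isomorphisms gives $\mathrm{Aut}(L_n)\cong S_n$, completing the argument.

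The only place where any real mathematical difficulty resides is Lemma \ref{lem-2-2} itself, which we are entitled to assume here; that result is the combinatorial heart (a characterization of the automorphisms of $KG(n,r)$, closely tied to Erd\H{o}s--Ko--Rado type uniqueness of maximal intersecting families). Within the scope of Lemma \ref{lem-3-4} there is no substantive obstacle: the whole task is to make the identification $L_n=KG(n,3)$ precise, and once the support map $\Phi$ is written down the adjacency check is immediate. The one point worth stating carefully is that $\Phi$ is genuinely a bijection on vertices (each $3$-set arises from a unique block and each block from a unique $3$-set), since it is this that lets the automorphism group transfer across the isomorphism without loss.
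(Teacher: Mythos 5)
Your proof is correct and follows exactly the paper's own route: the paper likewise identifies $L_n$ with the Kneser graph $KG(n,3)$ via the support map $\Delta_{i,j,k}\mapsto\{i,j,k\}$ and then invokes Lemma \ref{lem-2-2} with $r=3$, whose hypothesis $n\geq 2r+1$ gives the bound $n\geq 7$. Your write-up merely makes the bijection and adjacency check more explicit than the paper's ``it is easy to see'' remark.
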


By Lemma \ref{lem-3-2}, $A_e=\{1\}$. Then $G_e=G_e/A_e$ can be viewed as a permutation  group on $S$. By Lemma \ref{lem-3-3}, it is easy to see that $\Delta_{i,j,k}$ is an imprimitive block of $G_e$  on $S$, and furthermore,  $G_e$ acts on $\Delta$. Let $K$ be the kernel of this action. Then $G_e/K$ can be viewed as a permutation group on $\Delta$. In the following lemma, we  show that $G_e/K\leq \mathrm{Aut}(L_n)$.

\begin{lem}\label{lem-3-5}
For $n\geq 5$, $G_e/K\leq \mathrm{Aut}(L_n)$.
\end{lem}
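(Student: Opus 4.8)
The plan is to show that each $g \in G_e$, acting on the block system $\Delta$, induces an automorphism of the Kneser graph $L_n$; since $K$ is the kernel of the action of $G_e$ on $\Delta$, this produces a faithful embedding $G_e/K \hookrightarrow \mathrm{Aut}(L_n)$. Recall that by Lemma \ref{lem-3-3} the action of $G_e$ on $\Delta$ is already well-defined, each $g$ sending a block $\Delta_{i,j,k}=\{(i,j,k),(i,k,j)\}$ to another block; moreover the two $3$-cycles in a block share one support $\{i,j,k\}$, and two blocks are adjacent in $L_n$ exactly when their supports are disjoint. So it will suffice to prove that, for every $g \in G_e$, the induced permutation of $\Delta$ carries edges of $L_n$ to edges of $L_n$.

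The key idea I would exploit is that ``disjoint supports'' is an automorphism-invariant of $CAG_n$ relative to $e$, via the $4$-cycle count of Lemma \ref{lem-3-1}. Concretely, suppose $\Delta_{i,j,k}$ and $\Delta_{p,q,r}$ are adjacent in $L_n$, i.e. $\{i,j,k\}\cap\{p,q,r\}=\emptyset$. I would take the representatives $\tau=(i,j,k)$ and $\kappa=(p,q,r)$, which are distinct elements of $S$ with disjoint supports, so $\tau\kappa=\kappa\tau\neq e$; by Lemma \ref{lem-3-1} there is a unique $4$-cycle of $CAG_n$ through $e$, $\tau$ and $\kappa$. Since $g$ fixes $e$ and preserves $4$-cycles, it induces a bijection between the $4$-cycles through $e,\tau,\kappa$ and those through $e,\tau^g,\kappa^g$; hence there is a unique $4$-cycle through $e,\tau^g,\kappa^g$ as well. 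As $\tau^g,\kappa^g$ are distinct elements of $S$ (because $g$ permutes $S$ and $\tau\neq\kappa$), the converse direction of Lemma \ref{lem-3-1} forces $\mathrm{supp}(\tau^g)\cap\mathrm{supp}(\kappa^g)=\emptyset$. Because $\tau^g$ lies in the block $\Delta_{i,j,k}^g$ and $\kappa^g$ in the block $\Delta_{p,q,r}^g$, these image blocks have disjoint supports and are therefore adjacent in $L_n$.

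To finish, I would note that $L_n$ is finite and the induced map on $\Delta$ is a bijection: a vertex-bijection carrying edges to edges induces an injection on the finite edge set, which is then a bijection, so non-edges are forced to map to non-edges and the map is a genuine automorphism. (Equivalently, one applies the edge-preservation argument to $g^{-1}\in G_e$.) This shows that $g$ induces an element of $\mathrm{Aut}(L_n)$ for every $g\in G_e$, and hence $G_e/K\leq\mathrm{Aut}(L_n)$.

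The step I expect to be the real crux is the transfer from the algebraic relation ``disjoint supports'' to the automorphism-invariant count of $4$-cycles through $e$, which is exactly what Lemma \ref{lem-3-1} packages: among all intersection patterns of $\mathrm{supp}(\tau)$ and $\mathrm{supp}(\kappa)$, only the disjoint case yields a single $4$-cycle, and this uniqueness is plainly preserved by any $g\in G_e$. Everything else---well-definedness of the block action from Lemma \ref{lem-3-3}, and the finite-graph fact that an edge-preserving vertex-bijection is an automorphism---is routine bookkeeping.
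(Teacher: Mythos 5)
Your proposal is correct and follows essentially the same route as the paper: both use Lemma \ref{lem-3-1} to convert the adjacency relation in $L_n$ (disjoint supports, equivalently non-trivial commuting of representatives) into the count of $4$-cycles through $e$, which is manifestly invariant under any automorphism fixing $e$. The only cosmetic difference is that the paper runs a single ``iff'' chain handling both directions at once, while you prove edge-preservation one way and then invoke finiteness (or apply the argument to $g^{-1}$) to conclude the induced map is a genuine automorphism.
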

\begin{proof}
Since $G_e/K$ can be viewed as a permutation group on $\Delta$, it suffices to show that $G_e/K$  preserves the adjacency relations in $L_n$.

Let $gK\in G_e/K$. For $\Delta_{i,j,k}\neq \Delta_{p,q,r}\in \Delta$, we see that $\Delta_{i,j,k}\sim\Delta_{p,q,r}$ iff $\{i,j,k\}\cap\{p,q,r\}=\emptyset$ iff  $(i,j,k)$ and $(p,q,r)$ have disjoint support iff $(i,j,k)$ and $(p,q,r)$ commute. Thus it needs to be shown that $(i,j,k)(p,q,r)=(p,q,r)(i,j,k)$ iff $(i,j,k)^{gK}(p,q,r)^{gK}=(p,q,r)^{gK}(i,j,k)^{gK}$. By Lemma \ref{lem-3-1}, $(i,j,k)(p,q,r)$ $=(p,q,r)(i,j,k)$ iff there is a unique $4$-cycle in $CAG_n$ containing $e$, $(i,j,k)$ and $(p,q,r)$, which is the case iff there is a unique $4$-cycle in $CAG_n$
containing $e$, $(i,j,k)^g$ and $(p,q,r)^g$, which is the case iff $(i,j,k)^{g}(p,q,r)^{g}=(p,q,r)^{g}(i,j,k)^{g}$, which is the case iff $(i,j,k)^{gK}(p,q,$ $r)^{gK}=(p,q,r)^{gK}(i,j,k)^{gK}$ because $K$ fixes each block in $\Delta$.

The proof is now complete.
\end{proof}
Remember that $G_e$ acts on $\Delta$, and $K$ is the kernel of this action. The following lemma shows that $K\leq \mathbb{Z}_2$.
\begin{lem}\label{lem-3-6}
Let $K$ be the kernel of  $G_e$ acting on $\Delta$. Then $|K|\leq 2$.
\end{lem}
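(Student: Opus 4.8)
The plan is to understand exactly how an element $g\in K$ can act and to force it to behave in the same way on every block, so that $g$ must be either the identity or the map $h$. Since $K$ is the kernel of the action of $G_e$ on $\Delta$, each $g\in K$ fixes every block $\Delta_{i,j,k}=\{(i,j,k),(i,k,j)\}$ setwise; as a block has only two elements, $g$ restricted to it is either the identity (I will call the block of \emph{type F} for $g$) or the transposition $(i,j,k)\leftrightarrow(i,k,j)$ (\emph{type S}), and which of the two occurs does not depend on any labelling. Note that the identity automorphism is of type F on every block, whereas $h\colon\alpha\mapsto\alpha^{-1}$ sends $(i,j,k)$ to $(i,k,j)$ and so is of type S on every block; hence $\{1,h\}\subseteq K$. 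The whole lemma thus reduces to showing that the type of a given $g$ is the \emph{same} for all blocks. Indeed, if every block is of type F then $g$ fixes each element of $S=N_1(e)$, so $g\in A_e=\{1\}$ by Lemma \ref{lem-3-2}; and if every block is of type S then $g$ agrees with $h$ on $S$, so $gh$ fixes $e$ and each of its neighbours, whence $gh\in A_e=\{1\}$ and $g=h$. In either case $g\in\{1,h\}$ and $|K|\le 2$.

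The heart of the argument is to propagate the type between two blocks whose supports meet in exactly two points. I would fix such a pair, say $A=\Delta_{i,j,k}$ and $B=\Delta_{i,j,l}$, and write $\alpha_1=(i,j,k)$, $\alpha_2=(i,k,j)$, $\beta_1=(i,j,l)$, $\beta_2=(i,l,j)$. Using Lemma \ref{lem-3-1} (equivalently Table \ref{tab-1}) I would read off the number of common neighbours of each of the four cross pairs: the two ``parallel'' pairs $\{\alpha_1,\beta_1\}$ and $\{\alpha_2,\beta_2\}$ each have $3n-8$ common neighbours, since the associated quotient $\alpha_s\beta_t^{-1}$ is a $3$-cycle, while the two ``crossed'' pairs $\{\alpha_1,\beta_2\}$ and $\{\alpha_2,\beta_1\}$ each have $8$ common neighbours, since the associated quotient is a product of two disjoint transpositions. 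As $g$ is a graph automorphism it preserves the number of common neighbours of any pair of vertices, and because $3n-8\neq 8$ for every integer $n$, $g$ must send parallel pairs to parallel pairs. If $g$ is of type F on $A$ (so $\alpha_1^g=\alpha_1$), then $\{\alpha_1,\beta_1\}^g=\{\alpha_1,\beta_1^g\}$ must again be parallel, which forces $\beta_1^g=\beta_1$, i.e.\ type F on $B$; if $g$ is of type S on $A$, the same reasoning forces type S on $B$. Hence $g$ has the same type on $A$ and $B$.

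To finish I would invoke connectivity. The relation ``the supports meet in exactly two points'' is precisely adjacency in the Johnson graph on $3$-subsets of $\{1,\dots,n\}$, and this graph is connected for all $n\ge 5$ (one moves from any $3$-subset to any other by successive single-element swaps). Consequently any two blocks are joined by a chain in which consecutive members share two support points, and by the previous step the type of $g$ is constant along such a chain, hence constant on all of $\Delta$. Together with the reduction of the first paragraph this gives $K\subseteq\{1,h\}$, and therefore $|K|\le 2$ (in fact $|K|=2$, since $h\in K$).

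The step I expect to be the main obstacle is the orientation bookkeeping in the middle paragraph: one must correctly track the two cyclic orientations inside each block in order to decide which cross pairs are parallel (count $3n-8$) and which are crossed (count $8$). This is a finite verification resting entirely on Lemma \ref{lem-3-1} and Table \ref{tab-1}, and the single inequality it relies on, $3n-8\neq 8$, holds automatically. The remaining ingredients — that a two-element block is either fixed or swapped, that automorphisms preserve common-neighbour counts, and that the Johnson graph on $3$-subsets is connected — are routine.
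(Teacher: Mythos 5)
Your proposal is correct and follows essentially the same route as the paper's own proof: both hinge on the common-neighbour count distinction $3n-8\neq 8$ (Table \ref{tab-1}) to propagate the behaviour of an element of $K$ from one block to any block sharing two support points, reach all blocks by a chain (the paper uses the explicit chain $\Delta_{1,2,3}\rightarrow\Delta_{1,2,i}\rightarrow\Delta_{1,i,j}\rightarrow\Delta_{i,j,k}$ rather than appealing to Johnson-graph connectivity), and then finish with $A_e=\{1\}$ from Lemma \ref{lem-3-2}. The only difference is packaging: the paper bounds $[K:K^*]\le 2$ where $K^*$ is the subgroup fixing one block pointwise and shows $K^*=\{1\}$, while you show the fix-or-swap ``type'' is globally constant, which additionally identifies $K=\{1,h\}$ exactly.
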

\begin{proof}
Given $\Delta_{1,2,3}=\{(1,2,3),(1,3,2)\}$, let $K^*$ be the subgroup of $K$ that  fixes $(1,2,3)$ and $(1,3,2)$. Then $[K:K^*]\le 2$ since $|\Delta_{1,2,3}|= 2$. For any fixed $i$ ($4\leq i\leq n$), we first show that $K^*$ fixes $\Delta_{1,2,i}=\{(1,2,i),(1,i,2)\}$ pointwise. In fact, from Tab. \ref{tab-1} we see that $|N((1,2,3))\cap N((1,2,i))|=3n-8\neq 8=|N((1,2,3))\cap N((1,i,2))|$, which implies that  $K^*$ must fix $(1,2,i)$ and $(1,i,2)$ because $K^*$ fixes $\Delta_{1,2,i}=\{(1,2,i),(1,i,2)\}$ setwise. Similarly, for any fixed $j\not\in\{1,i\}$, one can show that $K^*$ fixes $\Delta_{1,i,j}=\{(1,i,j),(1,j,i)\}$ pointwise because $K^*$ has fixed $(1,i,2)$ and $(1,2,i)$. Also, for any fixed $k\not\in\{i,j\}$, one can show that $K^*$ fixes $\Delta_{i,j,k}=\{(i,j,k),(i,k,j)\}$ pointwise because $K^*$ has fixed $(i,j,1)=(1,i,j)$ and $(i,1,j)=(1,j,i)$. By the arbitrariness of $i$, $j$ and $k$, $K^*$ fixes every element in $S$, implying that $K^*\leq A_e=\{1\}$. It follows that $K\leq \mathbb{Z}_2$.

The proof is now complete.
\end{proof}

Combining Lemma \ref{lem-3-4}, Lemma \ref{lem-3-5} and Lemma \ref{lem-3-6}, we have the following corollary.

\begin{cor}\label{cor-3-1}
Let $S$ be the complete set of $3$-cycles in $S_n$ ($n\geq 5$), and let $CAG_n=\mathrm{Cay}(A_n,S)$ be the complete alternating group graph. Then
$$|\mathrm{Aut}(CAG_n)|\leq (n!)^2.$$
\end{cor}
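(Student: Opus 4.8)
The plan is to bound $|\mathrm{Aut}(CAG_n)|$ by exploiting vertex-transitivity and then controlling the vertex-stabilizer $G_e$. Since $CAG_n$ is a Cayley graph, $R(A_n)$ acts regularly on the vertex set $A_n$, so $G:=\mathrm{Aut}(CAG_n)$ is vertex-transitive and the orbit-stabilizer theorem gives
\[
|G| = |A_n|\cdot|G_e| = \tfrac{n!}{2}\,|G_e|.
\]
Hence everything reduces to proving $|G_e|\le 2\cdot n!$, after which $|G|\le \tfrac{n!}{2}\cdot 2\,n! = (n!)^2$ follows at once.

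To bound $|G_e|$ I would assemble the preceding lemmas. By Lemma~\ref{lem-3-2} we have $A_e=\{1\}$, so $G_e$ acts faithfully on $S=N_1(e)$; by Lemma~\ref{lem-3-3} it preserves each block $\Delta_{i,j,k}$ and therefore acts on $\Delta$. Writing $K$ for the kernel of the action on $\Delta$, Lemma~\ref{lem-3-6} gives $|K|\le 2$, while Lemma~\ref{lem-3-5} embeds $G_e/K$ into $\mathrm{Aut}(L_n)$. For $n\ge 7$, Lemma~\ref{lem-3-4} identifies $\mathrm{Aut}(L_n)\cong S_n$, so $|G_e/K|\le n!$, whence $|G_e|=|G_e/K|\cdot|K|\le 2\,n!$ and the corollary follows.

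The genuine difficulty is that this chain collapses precisely at the small cases $n=5$ and $n=6$, where Lemma~\ref{lem-3-4} is unavailable: $L_5=KG(5,3)$ is edgeless on its $\binom{5}{3}=10$ vertices and $L_6=KG(6,3)$ is a perfect matching, so $\mathrm{Aut}(L_n)$ is far larger than $S_n$ and Lemma~\ref{lem-3-5} no longer yields $|G_e/K|\le n!$. For these two values I would abandon the bare disjointness relation and instead use the finer data in Table~\ref{tab-1}: since $G_e$ preserves adjacency of $CAG_n$ it preserves every common-neighbor count $|N(\tau)\cap N(\kappa)|$, and one verifies that for $n=5$ and $n=6$ these counts, read together with the block structure of $\Delta$, separate the support-intersection types $|\mathrm{supp}(\tau)\cap\mathrm{supp}(\kappa)|\in\{0,1,2,3\}$. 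This recovers the full intersection pattern of the point-triples and forces the induced permutation of $\Delta$ to lie in $\mathrm{Sym}(\{1,\dots,n\})$, up to the complementation symmetry that appears when $n=6$. Ruling out that last symmetry---showing that no element of $G_e$ can swap each triple with its complement while respecting the $4$-cycle structure of Lemma~\ref{lem-3-1}---is the part I expect to be most delicate, and a short direct check is the safest way to finish the $n=5,6$ cases.
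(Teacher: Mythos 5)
Your argument for $n\ge 7$ is exactly the paper's: orbit--stabilizer gives $|\mathrm{Aut}(CAG_n)|=|R(A_n)|\cdot|G_e|$, and the chain Lemma~\ref{lem-3-2} $\Rightarrow$ Lemma~\ref{lem-3-3} $\Rightarrow$ Lemmas~\ref{lem-3-5}, \ref{lem-3-6}, \ref{lem-3-4} yields $|G_e|\le|G_e/K|\cdot|K|\le 2\,n!$. The genuine divergence is at $n=5,6$: there the paper proves nothing by hand, but simply reports the GAP4 computations $|\mathrm{Aut}(CAG_5)|=14400=(5!)^2$ and $|\mathrm{Aut}(CAG_6)|=518400=(6!)^2$, whereas you correctly diagnose why the lemma chain collapses ($L_5$ edgeless, $L_6$ a perfect matching --- precisely why Lemma~\ref{lem-3-4} is stated only for $n\ge 7$) and sketch a computer-free repair from Table~\ref{tab-1}. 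Your repair is in fact completable: for $n=5$ the counts ($5$ versus $7$ or $8$), combined with the block structure, show $G_e/K$ preserves the relation ``supports meet in exactly one point'' on $\Delta$, and that graph is isomorphic (via complementation of triples) to $KG(5,2)$, the Petersen graph, so the paper's own Lemma~\ref{lem-2-2} with $r=2$ gives $|G_e/K|\le 5!$; for $n=6$ the counts ($2$, $5$, and $\{10,8\}$) show $G_e/K$ preserves every intersection relation on $\Delta$, whose automorphism group is $S_6\times\mathbb{Z}_2$ with the extra involution being complementation $\Delta_{i,j,k}\mapsto\Delta_{\{i,j,k\}^c}$, and complementation can be excluded because $(1,2,3),(1,2,4),(1,2,5)$ are pairwise ``aligned'' (common-neighbor count $3n-8=10$ rather than $8$), while a two-case check on the orientation of the image of $(1,2,3)$ shows that no $3$-cycles supported on $\{4,5,6\},\{3,5,6\},\{3,4,6\}$ can be pairwise aligned. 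One correction to your plan: the $4$-cycle criterion of Lemma~\ref{lem-3-1} alone cannot rule out complementation, since complementation preserves disjointness of triples; the exclusion must use the finer $10$-versus-$8$ counts of Table~\ref{tab-1}, as above. The trade-off is clear: your route buys a uniform, computer-free proof (at the price of actually executing the check you defer, plus an external fact about the Johnson scheme $J(6,3)$ not quoted in the paper), while the paper buys brevity and certainty with a machine computation.
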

\begin{proof}
Let $G:=\mathrm{Aut}(CAG_n)=\mathrm{Aut}(\mathrm{Cay}(A_n,S))$. With the help of computer software GAP4 \cite{GAP4}, we get $|\mathrm{Aut}(CAG_5)|=14400=(5!)^2$ and $|\mathrm{Aut}(CAG_6)|=518400=(6!)^2$. If $n\geq 7$, we have $G_e/K\leq \mathrm{Aut}(L_n)\cong S_n$ by Lemma \ref{lem-3-4} and Lemma \ref{lem-3-5}. Thus $|G_e|=|G_e/K|\cdot|K|\leq |S_n|\cdot |K|$. By Lemma \ref{lem-3-6}, $|K|\leq 2$. Hence, $|G_e|\leq 2(n!)$. Therefore, $|\mathrm{Aut}(CAG_n)|=|R(A_n)|\cdot|G_e|\leq(n!)^2$, and our result follows.
\end{proof}
By Theorem \ref{thm-2-2} and Corollary \ref{cor-3-1}, we obtain the main result of this paper immediately.
\begin{thm}\label{thm-3-1}
Let $S$ be the complete set of $3$-cycles in $S_n$ ($n\geq 5$), and let $CAG_n=\mathrm{Cay}(A_n,S)$ be the complete alternating group graph. Then the automorphism group of $CAG_n$ is
$$\mathrm{Aut}(CAG_n)=(R(A_n)\rtimes \mathrm{Inn}(S_n))\rtimes \mathbb{Z}_2\cong (A_n\rtimes S_n)\rtimes \mathbb{Z}_2,$$
where $R(A_n)$ is the right regular representation of $A_n$, $\mathrm{Inn}(S_n)$ is the inner automorphism group of $S_n$, and $\mathbb{Z}_2=\langle h\rangle$, and $h$ is the map $\alpha\mapsto\alpha^{-1}$ ($\forall\alpha\in A_n$).
\end{thm}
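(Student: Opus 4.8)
The plan is to combine the lower bound from Theorem~\ref{thm-2-2} with the upper bound from Corollary~\ref{cor-3-1}, so that the proof reduces to a single order count. Writing $H := (R(A_n)\rtimes \mathrm{Inn}(S_n))\rtimes \mathbb{Z}_2$, Theorem~\ref{thm-2-2} already guarantees that $H$ is a subgroup of $G := \mathrm{Aut}(CAG_n)$. It therefore suffices to verify that $|H| = (n!)^2$ and then invoke Corollary~\ref{cor-3-1} to pin down the order of $G$.

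First I would compute $|H|$ explicitly. Since the center $Z(S_n)$ is trivial for $n\geq 3$, we have $\mathrm{Inn}(S_n)\cong S_n$ and hence $|\mathrm{Inn}(S_n)| = n!$; moreover $|R(A_n)| = |A_n| = n!/2$ and $|\mathbb{Z}_2| = 2$. The two semidirect products defining $H$ are internal, and each factor meets the preceding one trivially: an element $r_\gamma\in R(A_n)$ fixes the identity vertex $e$ only when $\gamma=e$, whereas every element of $\mathrm{Inn}(S_n)$ fixes $e$, so $R(A_n)\cap\mathrm{Inn}(S_n)=\{1\}$; and the proof of Theorem~\ref{thm-2-2} already established that $h\notin R(A_n)\rtimes\mathrm{Inn}(S_n)$. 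Consequently the order is multiplicative:
$$|H| = |R(A_n)|\cdot|\mathrm{Inn}(S_n)|\cdot|\mathbb{Z}_2| = \frac{n!}{2}\cdot n!\cdot 2 = (n!)^2.$$

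The conclusion is then immediate. Corollary~\ref{cor-3-1} gives $|G|\leq (n!)^2 = |H|$, while the containment $H\leq G$ forces $|H|\leq|G|$. Hence $|G| = |H| = (n!)^2$, and combined with $H\leq G$ this yields $G = H$, which is precisely the claimed identity $\mathrm{Aut}(CAG_n)=(R(A_n)\rtimes \mathrm{Inn}(S_n))\rtimes \mathbb{Z}_2\cong (A_n\rtimes S_n)\rtimes \mathbb{Z}_2$.

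I expect no genuine obstacle in this final deduction itself; the entire weight of the argument is carried by Corollary~\ref{cor-3-1}, whose proof rests on the structural Lemmas~\ref{lem-3-1}--\ref{lem-3-6}. The single most delicate ingredient is Lemma~\ref{lem-3-1}, the characterization of commuting pairs of $3$-cycles through the uniqueness of a $4$-cycle containing $e$: this is exactly what lets one transport the commuting relation, and hence Kneser-graph adjacency, through an arbitrary stabilizer element in Lemma~\ref{lem-3-5}, ultimately embedding $G_e/K$ into $\mathrm{Aut}(L_n)\cong S_n$. Once that embedding and the kernel estimate $|K|\leq 2$ from Lemma~\ref{lem-3-6} are in place, the order count above closes the argument.
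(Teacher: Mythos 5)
Your proposal is correct and takes essentially the same route as the paper, whose proof of Theorem~\ref{thm-3-1} is exactly this one-line combination of the lower bound in Theorem~\ref{thm-2-2} with the upper bound $|\mathrm{Aut}(CAG_n)|\leq (n!)^2$ of Corollary~\ref{cor-3-1}. The only detail you spell out beyond the paper's wording is the multiplicativity of $|H|=\frac{n!}{2}\cdot n!\cdot 2=(n!)^2$ via the trivial intersections of the factors, which the paper records in the sentence preceding Theorem~\ref{thm-2-2} and in that theorem's proof.
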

\begin{remark}\label{rem-5}
\emph{For $n=3$, we see that $CAG_3\cong K_3$, and thus $\mathrm{Aut}(CAG_3)\cong S_3$. For $n=4$, by using the  computer software GAP4 \cite{GAP4}, we obtain $|\mathrm{Aut}(CAG_4)|$ $=82944=144\cdot(4!)^2$, and further, $\mathrm{Aut}(CAG_4)\cong((((((A_4\times A_4)\rtimes \mathbb{Z}_2) \times A_4) \rtimes \mathbb{Z}_2) \rtimes  \mathbb{Z}_3) \rtimes  \mathbb{Z}_2) \rtimes  \mathbb{Z}_2$. Clearly, $\mathrm{Aut}(CAG_4)$ has a different form in comparison with  $\mathrm{Aut}(CAG_n)$ for $n\geq 5$.}
\end{remark}

\section{Conclusion}
The non-normality of the  complete alternating group graph $CAG_n$ has been proved, and the automorphism group of $CAG_n$ has been obtained. For $n\geq 5$, $n\neq 6$ and $4\leq k\leq n$, let $T$ be the set of all $k$-cycles in $S_n$. In Remarks \ref{rem-1} and \ref{rem-3}, we have seen that $\mathrm{Aut}(\mathrm{Cay}(\Gamma,T))\supseteq (R(\Gamma)\rtimes \mathrm{Inn}(S_n))\rtimes \mathbb{Z}_2\cong (\Gamma\rtimes S_n)\rtimes \mathbb{Z}_2$,  where $\Gamma=A_n$ or $S_n$ due to  $k$ is odd or even. Naturally, we have the following problem.
\begin{prob}\label{prob-1}
Let $T$ and $\Gamma$ be defined as above. Is $\mathrm{Aut}(\mathrm{Cay}(\Gamma,T))$ exactly equal to $(R(\Gamma)\rtimes \mathrm{Inn}(S_n))\rtimes \mathbb{Z}_2\cong (\Gamma\rtimes S_n)\rtimes \mathbb{Z}_2$?
\end{prob}

\section*{Acknowledgments}

We are grateful to the anonymous referees for their useful and constructive comments, which have considerably improved the readability of this paper.

\end{document}